\numberwithin{equation}{section}
\theoremstyle{plain}
\newtheorem{thm}{Theorem}[section]
\newtheorem{prop}[thm]{Proposition}
\newtheorem{lem}[thm]{Lemma}
\newtheorem{cor}[thm]{Corollary}
\theoremstyle{definition}
\newtheorem{rem}[thm]{Remark}
\newcommand{\beast}{\begin{eqnarray*}}
\newcommand{\east}{\end{eqnarray*}}
\newcommand{\N}{{\Bbb N}}
\newcommand{\Z}{{\Bbb Z}}
\renewcommand{\P}{{\Bbb P}}
\newcommand{\fP}{\wh{{\Bbb P}}}
\newcommand{\R}{{\Bbb R}}
\newcommand{\D}{{\bold D}}
\newcommand{\G}{{\Bbb G}}
\newcommand{\fG}{\wh{{\Bbb G}}}
\newcommand{\E}{{\bold E}}
\newcommand{\X}{{\bold X}}
\newcommand{\Y}{{\bold Y}}
\newcommand{\Af}{{\Bbb A}}
\newcommand{\fAf}{\wh{{\Bbb A}}}
\newcommand{\Spec}{{\mathrm{Spec}}\,}
\newcommand{\Spf}{{\mathrm{Spf}}\,}
\newcommand{\lra}{\longrightarrow}
\newcommand{\hra}{\hookrightarrow}
\newcommand{\lla}{\longleftarrow}
\newcommand{\ti}[1]{\widetilde{#1}}
\newcommand{\ul}[1]{\underline{#1}}
\newcommand{\ol}[1]{\overline{#1}}
\newcommand{\os}{\overset}
\newcommand{\Hom}{{\mathrm{Hom}}}
\newcommand{\an}{{\mathrm{an}}}
\newcommand{\Frac}{{\mathrm{Frac}}\,}
\newcommand{\cC}{{\cal C}}
\newcommand{\cD}{{\cal D}}
\newcommand{\cE}{{\cal E}}
\newcommand{\cH}{{\cal H}}
\newcommand{\cL}{{\cal L}}
\newcommand{\cO}{{\cal O}}
\newcommand{\cP}{{\cal P}}
\newcommand{\cQ}{{\cal Q}}
\newcommand{\cU}{{\cal U}}
\newcommand{\cV}{{\cal V}}
\newcommand{\cX}{{\cal X}}
\newcommand{\cY}{{\cal Y}}
\newcommand{\cZ}{{\cal Z}}
\newcommand{\fU}{{\frak U}}
\newcommand{\fV}{{\mathfrak{V}}}
\newcommand{\rk}{{\mathrm{rk}}\,}
\renewcommand{\sp}{{\mathrm{sp}}}
\newcommand{\wh}{\widehat}
\newcommand{\tr}{{\mathrm{tr}}}
\renewcommand{\d}{\dagger}
\newcommand{\lam}{\lambda}
\newcommand{\pa}{\partial}
\newcommand{\Mat}{{\mathrm{Mat}}}
\newcommand{\e}{{\bold e}}
\newcommand{\f}{{\bold f}}
\renewcommand{\1}{{\bold 1}}
\newcommand{\rrho}{{\boldsymbol \rho}}
\newcommand{\ad}{{\mathrm{ad}}}
\newcommand{\MIC}{{\mathrm{MIC}}}
\newcommand{\Lra}{\Longrightarrow}
\begin{document}
\title{Cut-by-curves criterion for the overconvergence of $p$-adic 
differential equations}
\author{Atsushi Shiho
\footnote{
Graduate School of Mathematical Sciences, 
University of Tokyo, 3-8-1 Komaba, Meguro-ku, Tokyo 153-8914, JAPAN. 
E-mail address: shiho@ms.u-tokyo.ac.jp \, 
Mathematics Subject Classification (2000): 12H25.}}
\date{}
\maketitle

\begin{abstract}
In this paper, we prove a `cut-by-curves criterion' for 
the overconvergence of integrable connections on 
certain rigid analytic spaces and certain varieties over $p$-adic fields. 
\end{abstract}

\tableofcontents

\section*{Introduction}

Let $K$ be a complete discrete valuation field of mixed characteristic 
$(0,p)$ with ring of integers $O_K$ and residue field $k$. 
Assume we are given an open immersion $\cX \hra \ol{\cX}$ of $p$-adic 
formal schemes separated and smooth over $\Spf O_K$ such that the 
complement is a relative simple normal crossing divisor and let 
$X \hra \ol{X}$ be the special fiber of $\cX \hra \ol{\cX}$. 
(In this paper, we call such a pair $(\cX,\ol{\cX})$ 
{\it a formal smooth pair} and call the pair $(X,\ol{X})$ 
{\it the special fiber} of $(\cX,\ol{\cX})$.) Then 
we have an admissible open immersion $\cX_K \hra \ol{\cX}_K$ of 
associated rigid spaces over $K$ and an open immersion 
$]\ol{X}-X[_{\ol{\cX}} \hra \ol{\cX}_K$ from the tubular neighborhood of 
$\ol{X}-X$ in $\ol{X}_K$. A strict neighborhood of $\cX_K$ in $\ol{\cX}_K$ 
is an admissible open set $\fU$ of $\ol{\cX}_K$ containing $\cX_K$ 
such that $\{\fU,]\ol{X}-X[_{\ol{\cX}}\}$ forms an admissible covering 
of $\ol{\cX}_K$. Let $\MIC((\cX_K,\ol{\cX}_K)/K)$ be the category of 
pairs $(\fU,(E,\nabla))$ consisting of a strict neighborhood 
$\fU$ of $\cX_K$ in $\ol{\cX}_K$ and a $\nabla$-module
($=$locally free module of finite rank endowed with an 
integrable connection) $(E,\nabla)$ on $\fU$ over $K$, whose set of 
morphisms is defined by $\Hom((\fU,(E,\nabla)),(\fU',(E',\nabla'))) 
:= \varinjlim_{\fU''}\Hom((E,\nabla)|_{\fU''},(E',\nabla')|_{\fU''})$, 
where $\fU''$ runs through strict neighborhoods of $\cX_K$ in 
$\ol{\cX}_K$ contained in $\fU\cap\fU'$. We 
call an object in $\MIC((\cX_K,\ol{\cX}_K)/K)$ a $\nabla$-module on 
a strict neighborhood of $\cX_K$ in $\ol{\cX}_K$ by abuse of 
terminology, and we will often denote it simply by $(E,\nabla)$ in the 
following. 
We say that a $\nabla$-module on 
a strict neighborhood of $\cX_K$ in $\ol{\cX}_K$ is {\it overconvergent}
if it comes from an overconvergent isocrystal on $(X,\ol{X})/K$. 
In this paper, as the first main theorem, 
we prove a `cut-by-curves criterion' for the overconvergence 
of integrable connections on strict neighborhoods of $\cX_K$ 
in $\ol{\cX}_K$. \par 
A morphism $f:(\cY,\ol{\cY}) \lra (\cX,\ol{\cX})$ of formal smooth pairs 
is a morphism $f:\ol{\cY} \lra \ol{\cX}$ satisfying $f(\cY) \subseteq \cX$ 
and it is called strict if $f^{-1}(\cX)=\cY$. $f$ is called a (locally) closed 
immersion if so is the morphism $f:\ol{\cY} \lra \ol{\cX}$. 
If $f:(\cY,\ol{\cY}) \lra (\cX,\ol{\cX})$ is a morphism of formal smooth 
pairs and if $(E,\nabla)$ is a $\nabla$-mobule on a strict neighborhood 
of $\cX_K$ in $\ol{\cX}_K$, we can define in natural way the pull-back 
$f^*(E,\nabla)$ of $(E,\nabla)$ by $f$, which is a $\nabla$-module on 
a strict neighborhood of $\cY_K$ in $\ol{\cY}_K$. Then our first 
main theorem is described as follows: 

\begin{thm}\label{mainthm}
Let $K, k$ be as above and assume that $k$ is uncountable. 
Let $(\cX,\ol{\cX})$ be a formal smooth pair and let $(E,\nabla)$ be 
a $\nabla$-module on a strict neighborhood of $\cX_K$ in $\ol{\cX}_K$. 
Then the following are equivalent$:$ 
\begin{enumerate}
\item 
$(E,\nabla)$ is overconvergent. 
\item 
For any strict locally closed immersion $i:(\cY,\ol{\cY}) \hra 
(\cX,\ol{\cX})$ of formal smooth pairs with $\dim \ol{\cY} = 1$, 
$i^*(E,\nabla)$ is overconvergent. 
\end{enumerate}
\end{thm}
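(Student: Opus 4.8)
The implication $(1)\Rightarrow(2)$ is formal. The construction of the category of overconvergent isocrystals on $(X,\ol X)/K$ is functorial for morphisms of pairs, and compatible with passage to the underlying $\nabla$-module on a strict neighborhood; so if $(E,\nabla)$ is attached to an overconvergent isocrystal $\cE$ on $(X,\ol X)/K$, then for any strict locally closed immersion $i\colon(\cY,\ol\cY)\hra(\cX,\ol\cX)$ with $\dim\ol\cY=1$ the pull-back $i^*(E,\nabla)$ is attached to $i^*\cE$, an overconvergent isocrystal on $(Y,\ol Y)/K$, hence is overconvergent. All the content lies in $(2)\Rightarrow(1)$.

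For $(2)\Rightarrow(1)$ the plan is to recast overconvergence as a convergence statement for the Taylor stratification and then to reduce that statement to curves. A $\nabla$-module on a strict neighborhood of $\cX_K$ in $\ol\cX_K$ carries a canonical formal stratification $\epsilon$ along the diagonal of $\ol\cX_K\times_K\ol\cX_K$, and $(E,\nabla)$ is overconvergent if and only if $\epsilon$ converges on a strict neighborhood of the diagonal inside the tube $]\ol X[_{\ol\cX\times_{O_K}\ol\cX}$ (the cocycle condition being automatically inherited from $\nabla$). Since overconvergence is local on $\ol\cX$ for the Zariski topology — and a strict locally closed immersion of a one-dimensional pair into a Zariski open of $(\cX,\ol\cX)$ composes to one into $(\cX,\ol\cX)$ — we may assume $\ol\cX=\Spf A$ with \'etale coordinates $x_1,\dots,x_n$ and $\cX=\{x_1\cdots x_r\ne 0\}$. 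Writing $\epsilon=\sum_{I\in\N^n}N_I\otimes\underline{\xi}^{\,I}$, with $\underline{\xi}$ the standard coordinates on the (logarithmic, for the first $r$ of them) tube of the diagonal and $N_I$ sections of $\End E$ over some strict neighborhood $V_0$ of $\cX_K$, overconvergence becomes the assertion that the radius of convergence of $\sum_I N_I\underline{\xi}^{\,I}$ exceeds $1$ uniformly over some strict neighborhood $V\subseteq V_0$ of $\cX_K$.

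Thus it suffices to prove the following cut-by-curves statement for power series: $\epsilon$ as above, convergent on the formal neighborhood of the diagonal, converges on a strict neighborhood of the diagonal as soon as its pull-back along every strict locally closed immersion $i\colon(\cY,\ol\cY)\hra(\cX,\ol\cX)$ with $\dim\ol\cY=1$ does. I would argue this by contradiction. If $\epsilon$ converges on no strict neighborhood of the diagonal then, running through a cofinal countable family of pairs (strict neighborhood $V$, radius $\rho>1$), one extracts a sequence $z_m$ of points of $V_0$ tending to the boundary tube $]\ol X-X[_{\ol\cX}$ at which the local radius of convergence of $\epsilon$ falls below every prescribed $\rho>1$. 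The crux is then to produce a single strict locally closed immersion $i\colon(\cY,\ol\cY)\hra(\cX,\ol\cX)$ with $\dim\ol\cY=1$ whose image passes through a cofinal subsequence of the $z_m$, meets $\ol X-X$ transversally, and lies in $\cX$ elsewhere; restricting $\epsilon$ to this curve then yields a stratification with the same defect of convergence, contradicting the overconvergence of $i^*(E,\nabla)$. Here is precisely where the uncountability of $k$ enters: the curve has to avoid a countable collection of "bad" closed conditions (loss of transversality along $\ol X-X$, spurious intersection with other branches, failure of the ambient coordinate estimates), and such a curve exists over an uncountable residue field for the same reason that a countable union of proper closed subvarieties cannot exhaust an affine space over an uncountable field, while it may well fail to exist when $k$ is countable.

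I expect this last step — the construction of the auxiliary curve, together with the verification that the defect of convergence genuinely transfers to it (which requires the uniformity of the radius of convergence over an entire strict neighborhood, not just pointwise bounds) — to be the principal obstacle. The remaining ingredients, namely the dictionary between overconvergent isocrystals, Taylor stratifications and power-series estimates and the reduction to the local coordinatized situation, are routine once the standard theory of overconvergent isocrystals on formal smooth pairs is set up.
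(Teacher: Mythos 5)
Your outline of $(1)\Rightarrow(2)$ and of the dictionary between overconvergence, Taylor stratifications and power-series estimates is correct, and you rightly isolate the uncountability of $k$ as being used to intersect countably many dense conditions. But the execution of $(2)\Rightarrow(1)$ diverges from what actually works, and the strategy of running the auxiliary curve through a sequence of ``bad'' analytic points has a real flaw that you should be aware of: \emph{the radius of convergence of the restriction of $(E,\nabla)$ to a curve can be strictly larger than that of the ambient module}. The relevant invariant is a minimum over coordinate directions of spectral norms of the $\partial_i$, so a curve that happens to pass through a point where the ambient radius is small may nonetheless miss the ``bad direction'' entirely and its pull-back may be perfectly overconvergent. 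Passing through bad points therefore does not by itself transfer the defect of convergence, and without a mechanism aligning the curve with the offending $\partial_i$ the contradiction never materializes. There is a second, more elementary obstruction: your $z_m$ are analytic points of the rigid space $\ol\cX_K$, while the curves in the theorem are locally closed formal subschemes of $\ol\cX$; forcing a formal curve to pass through a prescribed infinite sequence of rigid points is not the kind of condition that is expressible as avoiding a countable union of proper closed subvarieties, so the ``uncountable $k$'' argument you invoke does not apply in that form.

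The paper's proof circumvents both problems by a different mechanism. It first reduces, via Kedlaya's result on \'etale covers of affine space in positive characteristic and a finite-\'etale pushforward argument (a $\nabla$-module is a direct summand of $f^*f_*$ of itself), to the model case $(\cX_0,\ol\cX_0)=(\fG_{m,O_K}^n\times\fAf_{O_K}^m,\fAf_{O_K}^{n+m})$, where strict neighborhoods are polyannuli and the Kedlaya--Xiao theory of the \emph{intrinsic generic radius of convergence} $IR(E,\rrho)$ applies. It then reformulates overconvergence as $IR(E,\1)=1$, using the \emph{continuity} of $\rrho\mapsto IR(E,\rrho)$ (Proposition \ref{kedlayaxiaoprop}). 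Given $IR(E,\1)<1$, one fixes a coordinate index $i$ realizing the minimum and then, via the Gauss-norm Lemma \ref{tech}, finds a countable family of dense opens $V_{r,s}$ in $\G_{m,k}^{n+m-1}$ on which the matrix norms of all powers of $\partial_i$ are \emph{preserved} under specialization to a fiber $\ti x\times A^1$. Uncountability of $k$ then gives a single closed point $x\in\bigcap_{r,s}V_{r,s}$, and continuity pins down $IR(E(\ti x_K),1)=p^{-1/(p-1)}|\partial_i|^{-1}_{\1,E,\sp}<1$ on the curve $\ti x\times\fAf^1_{O_K}$ inserted in the $i$-th slot. So the curve is a \emph{generic fiber aligned with the bad coordinate direction}, not a curve threaded through bad points, and the missing ideas in your sketch are precisely: the finite-\'etale reduction to a polyannulus, the intrinsic radius and its continuity, and the lemma guaranteeing that spectral norms survive specialization to generic fibers.
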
 

Note that, in Theorem \ref{mainthm}, the implication 
(1)$\,\Longrightarrow\,$(2) is easy: Indeed, 
if we denote the morphism induced by $i$ on special fibers by 
$i_0:(Y,\ol{Y}) \lra (X,\ol{X})$ and if $(E,\nabla)$ comes from 
an overconvergent isocrystal $\cE$ on $(X,\ol{X})/K$, 
$i^*(E,\nabla)$ comes from the overconvergent isocrystal 
$i_0^*\cE$ on $(Y,\ol{Y})/K$ and hence it is overconvergent. 
So what we should prove is the implication 
(2)$\,\Longrightarrow\,$(1). We prove it by using 
Kedlaya's result (\cite{kedlayamore}) on etale covers of smooth $k$-varieties, 
the notion of intrinsic generic radius of convergence of 
$\nabla$-modules on polyannuli due to Kedlaya-Xiao (\cite{kedlayaxiao}) and 
some techniques developped in \cite{cc}. \par 
The second main theorem is an algebraic variant of the theorem above. 
Assume we are given an open immersion $\X \hra \ol{\X}$ 
of smooth schemes over $\Spec O_K$ such that the 
complement is a relative simple normal crossing divisor.  
(In this paper, we call such a pair $(\X,\ol{\X})$ 
{\it a smooth pair over} $O_K$.) Denote the generic fiber of $\X$ by 
$\X_K$ and let $\MIC((\X_K/K)$ be the category of $\nabla$-module
($=$locally free module of finite rank endowed with an 
integrable connection) $(E,\nabla)$ on $\X_K$ over $K$ in algebraic sense. 
Let $\X^{\an}_K$ be the rigid analytic space associated to the 
$K$-scheme $\X_K$ and let $\MIC((\X^{\an}_K/K)$ 
be the category of $\nabla$-modules 
on $\X^{\an}_K$ over $K$ in analytic sense. Also, let 
$\wh{\X}, \wh{\ol{\X}}$ be the $p$-adic completion of $\X, \ol{\X}$, 
respectively. Then $\X^{\an}_K \cap \wh{\ol{\X}}_K$ 
is a strict neighborhhod of $\wh{\X}_K$ in 
$\wh{\ol{\X}}_K$. Hence we have the functors 
$$ 
\begin{aligned}
& \MIC(\X_K/K) \lra  \MIC(\X^{\an}_K/K) \lra 
\MIC((\wh{\X}_K,\wh{\ol{\X}}_K)/K); \\ 
& \hspace{0.7cm} (E,\nabla) \hspace{0.5cm}\longmapsto\hspace{0.3cm}  
(E^{\an},\nabla^{\an}) \hspace{0.3cm}\longmapsto
(\X^{\an}_K \cap \wh{\ol{\X}}_K,(E^{\an},\nabla^{\an})), 
\end{aligned} 
$$ 
where the first one is the analytification. We call an object $(E,\nabla)$ in 
$\MIC(\X_K/K)$ {\it overconvergent} if the associated object 
$(\X^{\an}_K \cap \wh{\ol{\X}}_K, (E^{\an},\nabla^{\an}))$ in 
$\MIC((\wh{\X}_K,\wh{\ol{\X}}_K)/K)$ is overconvergent. \par 
We can define the notion of a (strict) morphism 
$f:(\Y,\ol{\Y}) \lra (\X,\ol{\X})$ of smooth pairs over $O_K$ 
in natural way and it is called a (locally) closed 
immersion if so is the morphism $f:\ol{\Y} \lra \ol{\X}$. 
As in the case of formal smooth pairs, we can define the pull-back of a 
$\nabla$-module on $\X_K$ by $f$, which is a $\nabla$-module on $\Y_K$. 
Then our second main theorem is described as follows: 

\begin{thm}\label{mainthm2}
Let $K, k$ be as above and assume that $k$ is uncountable. 
Let $(\X,\ol{\X})$ be a formal smooth pair such that 
$\ol{\X}$ is projective over $O_K$ and let $(E,\nabla)$ be 
a $\nabla$-module on $\X_K$. 
Then the following are equivalent$:$ 
\begin{enumerate}
\item 
$(E,\nabla)$ is overconvergent. 
\item 
For any strict locally closed immersion $i:(\Y,\ol{\Y}) \hra 
(\X,\ol{\X})$ of smooth pairs over $O_K$ with $\dim (\ol{\Y}/O_K) = 1$, 
$i^*(E,\nabla)$ is overconvergent. 
\end{enumerate}
\end{thm}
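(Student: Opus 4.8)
The plan is to deduce Theorem \ref{mainthm2} from Theorem \ref{mainthm} by reducing the algebraic situation to the formal one. As before, the implication $(1)\Rightarrow(2)$ is immediate: overconvergence is preserved by pull-back via $i$, since on generic fibers $i^*$ corresponds to pull-back of the associated overconvergent isocrystal along the induced morphism of special-fiber pairs. So the content is $(2)\Rightarrow(1)$. First I would reduce to a setting where Theorem \ref{mainthm} applies directly. The hypotheses give an open immersion $\X\hra\ol{\X}$ of smooth $O_K$-schemes with $\ol{\X}$ projective over $O_K$ and $\ol{\X}-\X$ a relative simple normal crossing divisor; taking $p$-adic completions yields a formal smooth pair $(\wh{\X},\wh{\ol{\X}})$ whose special fiber is the special fiber $(X,\ol{X})$ of $(\X,\ol{\X})$, and by the discussion preceding the theorem $\X^{\an}_K\cap\wh{\ol{\X}}_K$ is a strict neighborhood of $\wh{\X}_K$ in $\wh{\ol{\X}}_K$. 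By definition, $(E,\nabla)\in\MIC(\X_K/K)$ is overconvergent precisely when the induced object $(\X^{\an}_K\cap\wh{\ol{\X}}_K,(E^{\an},\nabla^{\an}))$ of $\MIC((\wh{\X}_K,\wh{\ol{\X}}_K)/K)$ is overconvergent in the sense of Theorem \ref{mainthm}. Thus it suffices to verify condition (2) of Theorem \ref{mainthm} for this latter object, i.e.\ to check that for every strict locally closed immersion $i:(\cY,\ol{\cY})\hra(\wh{\X},\wh{\ol{\X}})$ of formal smooth pairs with $\dim\ol{\cY}=1$, the pull-back $i^*(E^{\an},\nabla^{\an})$ is overconvergent.

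The key step is therefore an approximation argument: every one-dimensional formal smooth sub-pair of $(\wh{\X},\wh{\ol{\X}})$ should, after possibly shrinking, be captured by an algebraic one-dimensional smooth sub-pair of $(\X,\ol{\X})$ for which hypothesis (2) of the present theorem is available. Concretely, given such an $i$, I would first note that $\ol{\cY}$, being proper (it is closed in the proper formal scheme $\wh{\ol{\X}}$ — here the projectivity of $\ol{\X}$, hence properness of $\wh{\ol{\X}}$, is used) and formally smooth of relative dimension one over $\Spf O_K$, is algebraizable: by formal GAGA / Grothendieck's existence theorem there is a projective smooth curve $\ol{\Y}\to\Spec O_K$ with $\wh{\ol{\Y}}=\ol{\cY}$, and the closed immersion $\ol{\cY}\hra\wh{\ol{\X}}$ algebraizes to a closed immersion $\ol{\Y}\hra\ol{\X}$ (using projectivity of $\ol{\X}$ to pass from formal to algebraic closed subschemes). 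Setting $\Y:=\ol{\Y}\times_{\ol{\X}}\X$ gives a strict locally closed immersion $i_{\mathrm{alg}}:(\Y,\ol{\Y})\hra(\X,\ol{\X})$ of smooth pairs over $O_K$ with $\dim(\ol{\Y}/O_K)=1$, whose $p$-adic completion recovers $i$. By hypothesis (2), $i_{\mathrm{alg}}^*(E,\nabla)$ is overconvergent, which by definition means that its associated analytic object on a strict neighborhood of $\Y^{\an}_K\cap\wh{\ol{\Y}}_K$ comes from an overconvergent isocrystal; but that associated object is exactly $i^*(E^{\an},\nabla^{\an})$. Hence condition (2) of Theorem \ref{mainthm} holds, Theorem \ref{mainthm} gives overconvergence of $(\X^{\an}_K\cap\wh{\ol{\X}}_K,(E^{\an},\nabla^{\an}))$, and this is the definition of overconvergence of $(E,\nabla)$ in $\MIC(\X_K/K)$, completing the proof.

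The main obstacle I anticipate is the algebraization step and its compatibility with pull-back of connections — that is, checking carefully that an arbitrary one-dimensional \emph{formal} smooth sub-pair of $(\wh{\X},\wh{\ol{\X}})$ genuinely arises (at least after shrinking to a suitable strict neighborhood) as the completion of an algebraic one-dimensional smooth sub-pair of $(\X,\ol{\X})$, and that the analytification-then-restriction of the algebraic pull-back agrees with the formal pull-back $i^*(E^{\an},\nabla^{\an})$ as objects of the relevant $\MIC$ category. Formal GAGA handles the existence of $\ol{\Y}$ and $\ol{\Y}\hra\ol{\X}$, but one must be attentive that $\ol{\cY}\hra\wh{\ol{\X}}$ need not meet $\wh{\X}$ in the completion of $\Y$ on the nose unless strictness is arranged; strictness of $i$ (i.e.\ $i^{-1}(\wh{\X})=\cY$) is what makes $\cY=\wh{\Y}$ work, and this should be matched with strictness of $i_{\mathrm{alg}}$ via $\Y=\ol{\Y}\times_{\ol{\X}}\X$. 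A secondary, more routine point is the behaviour of "overconvergent" under the chain of functors $\MIC(\Y_K/K)\to\MIC(\Y^{\an}_K/K)\to\MIC((\wh{\Y}_K,\wh{\ol{\Y}}_K)/K)$, namely that it is detected faithfully along the way; this is built into the definitions but deserves an explicit remark. Apart from these compatibilities, the argument is a formal reduction, with all the genuine analytic difficulty absorbed into Theorem \ref{mainthm}.
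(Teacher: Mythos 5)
Your reduction has a genuine gap in the algebraization step, and it is not repairable by formal GAGA. Condition (2) of Theorem \ref{mainthm}, applied to $(\wh{\X},\wh{\ol{\X}})$, requires overconvergence of $i^*(E^{\an},\nabla^{\an})$ for \emph{every} strict locally closed immersion of formal smooth pairs $i:(\cY,\ol{\cY})\hra(\wh{\X},\wh{\ol{\X}})$ with $\dim\ol{\cY}=1$. Such a $\ol{\cY}$ is closed only in an open formal subscheme of $\wh{\ol{\X}}$, hence need not be proper over $\Spf O_K$, and Grothendieck's existence theorem (which needs properness) cannot be invoked. But the problem is worse than a technical one: the vast majority of one-dimensional formal sub-pairs of $(\wh{\X},\wh{\ol{\X}})$ simply do not algebraize. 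For instance, inside a $p$-adic completion of an affine chart, the graph $\{y=h(x)\}\subset\fAf^2_{O_K}$ of a non-algebraic convergent power series $h$ is a strict locally closed formal sub-pair of dimension one that is the completion of no algebraic sub-pair of $(\X,\ol{\X})$. Consequently, the hypothesis (2) of Theorem \ref{mainthm2}, which concerns only algebraic curve pairs, is strictly weaker than condition (2) of Theorem \ref{mainthm} applied to the completion, and the direct transfer you propose does not go through.

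The paper circumvents exactly this difficulty. It does not apply Theorem \ref{mainthm} as a black box; it appeals instead to the refined version recorded in Remark \ref{refined}, which says that when $(\cX,\ol{\cX})$ carries a strict finite etale map $f$ to the standard model $(\fG^n_{m,O_K}\times\fAf^m_{O_K},\fAf^{n+m}_{O_K})$, one only needs to test overconvergence on the explicit family of curves $(\cY_{\ti{x},i},\ol{\cY}_{\ti{x},i})$ obtained by pulling back coordinate lines along $f$. The substantial content of the paper's proof of Theorem \ref{mainthm2} is then to produce such an $f$ \emph{algebraically}: after toric blow-ups away from a chosen point $x$ (Lemma \ref{lemma0}) to secure the irreducibility hypothesis $(*)_x$, the formal and algebraic analogues of Kedlaya's etale cover theorem (Theorem \ref{moreformal}, Corollary \ref{morealg}) give an algebraic finite flat map $\ol{\X}'\to\P^n_{O_K}$ etale near $\Af^n_k$. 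The curves $\ol{\Y}_{y,i}=f^{-1}(\ol{\Y}_{y,i,0}\cap V)$ are then \emph{algebraic} locally closed sub-pairs, so they are covered by hypothesis (2) of Theorem \ref{mainthm2}, and Remark \ref{refined} applied after $p$-adic completion closes the argument. The auxiliary algebraic construction is therefore not a compatibility check but the essential ingredient replacing your (untenable) algebraization claim.
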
 

Since the implication (1)$\,\Longrightarrow\,$(2) is obvious as in the case 
of Theorem \ref{mainthm}, it suffices to prove the implication 
(2)$\,\Longrightarrow\,$(1). We prove it by reducing to a refined version of 
Theorem \ref{mainthm} (see Remark \ref{refined}): To do this, 
we prove a partial generalization of 
Kedlaya's result (\cite[Theorem 2]{kedlayamore}) 
on etale covers of smooth $k$-varieties to the case of smooth formal schemes 
and smooth schemes over $O_K$. \par 
The author is partly supported by Grant-in-Aid for Young Scientists (B), 
the Ministry of Education, Culture, Sports, Science and Technology of 
Japan and JSPS Core-to-Core program 18005 whose representative is 
Makoto Matsumoto. \par 

\section*{Convention}
Throughout this paper, $K$ is 
a complete discrete valuation field of mixed characteristic 
$(0,p)$ with ring of integers $O_K$ and residue field $k$. 
Let $|\cdot|: K \lra \R_{\geq 0}$ be a fixed valuation of $K$ and 
let $\Gamma^*$ be $\sqrt{|K^{\times}|} \cup \{0\}$. 
For a $p$-adic formal scheme $\cX$ topologically of finite type over $O_K$, 
we denote the associated rigid space by $\cX_K$. 
A $k$-variety means a reduced separated scheme of finite type 
over $k$. A closed point in a smooth $k$-variety $X$ is called a separable 
closed point if its residue field is separable over $k$. 
For a $p$-adic smooth formal scheme $\cX$ over $\Spf O_K$ 
(resp. a smooth scheme $\X$ over $\Spec O_K$) with 
special fiber $X$ and a separable closed point $x$ of $X$, 
a lift of $x$ in $\cX$ (resp. $\X$) is a closed sub $p$-adic formal scheme 
$\ti{x} \hra \cX$ which is etale over $\Spf O_K$ 
(resp. a closed subscheme 
$\ti{x} \hra \X$ which is etale over $\Spec O_K$) with special fiber $x$. 
(Note that there always exists a lift of $x$.) \par 
We use freely the notion concerning overconvergent 
isocrystals. For detail, see \cite{berthelotrig} and 
\cite[\S 2]{kedlayaI}. See also 
Propositions \ref{octaylor}, \ref{cov} in the text. 

\section{Proof of the first main theorem}

In this section, we give a proof of the first main theorem (Theorem 
\ref{mainthm}). 
First, let us recall the basic definition concerning overconvergence and 
recall a concrete description of overconergence for $\nabla$-modules, 
which is due to Berthelot (\cite[2.2.13]{berthelotrig}, 
\cite[2.5.6--8]{kedlayaI}). \par 
For a formal smooth pair $(\cX,\ol{\cX})$ with special fiber $(X,\ol{X})$, 
let us denote the category of overconvergent isocrystals on $(X,\ol{X})/K$ 
by $I^{\d}((X,\ol{X})/K)$. Then we have a fully-faithful functor 
$$ \Phi_{(\cX,\ol{\cX})}: 
I^{\d}((X,\ol{X})/K) \lra \MIC((\cX_K,\ol{\cX}_K)/K) $$ 
which is functorial with respect to $(\cX,\ol{\cX})$. As is written in 
the introduction, we say that an object $(E,\nabla)$ in 
$\MIC((\cX_K,\ol{\cX}_K)/K)$ is overconvergent if it is in the essential 
image of $\Phi_{(\cX,\ol{\cX})}$. \par 
In this paper, we need a 
concrete description of overconergence for the objects 
in $\MIC((\cX_K,\ol{\cX}_K)/K)$, using the Taylor series. So we will 
recall it. Let $(\cX,\ol{\cX})$ be a formal smooth pair and 
assume, for the moment, the following condition $(*)$: \\
\quad \\
 $(*)$: \,\,\,\, 
$\ol{\cX}$ is affine, $\ol{\cX}-\cX$ is a union of smooth divisors 
$\cD_i \,(1 \leq i \leq n)$ which are defined as the zero locus 
of some $t_i \in \Gamma(\ol{\cX},\cO_{\cX})\,(1 \leq i \leq m)$, 
and $\Omega^1_{\ol{\cX}_K}$ is freely generated by $dt_i \,(1 \leq i \leq 
n+m)$ for $t_1,...,t_n$ as before and some $t_{n+1}, ..., t_{n+m} \in 
\Gamma(\ol{\cX},\cO_{\cX})$. \\
\quad \\
With the assumption of $(*)$, we denote the induced differential 
operator $\pa/\pa t_i$ (which operates on $\nabla$-modules on strict 
neighborhoods of $\cX_K$ in $\ol{\cX}_K$) 
simply by $\pa_i$. Also, for $\lam \in (0,1] \cap \Gamma^*$, we put 
$$ 
\fU_{\lam} := \{x \in \ol{\cX}_K\,\vert\, |t_i(x)| \geq \lam \,\, 
\text{for all $1 \leq i \leq n$}\}. $$
Then we can give a 
concrete description of overconergence in terms of Taylor series in 
the following way: 

\begin{prop}[{\cite[2.2.13]{berthelotrig}, 
\cite[2.5.6--8]{kedlayaI}}]\label{octaylor}
Let $(\cX,\ol{\cX})$ be as above and let $(E,\nabla)$ be a 
$\nabla$-module on a strict neighborhood $\fV$ of $\cX_K$ in $\ol{\cX}_K$. 
Fix a set of generators $(e_{\alpha})_{\alpha}$ of $\Gamma(\fV,E)$. 
Then $(E,\nabla)$ is overconvergent if and only if the following 
condition is satisfied$:$ For each $\eta \in (0,1)\cap\Gamma^*$, 
there exists $\lam_0 \in (0,1)\cap\Gamma^*$ such that for any 
$\lam \in [\lam_0,1)\cap\Gamma^*$ and any $\alpha$, we have 
$\fU_{\lam} \subseteq \fV$ and the multi-sequence 
$$\left\{ \left\| \dfrac{1}{i_1!\cdots i_{n+m}!}\pa^{i_1}_1\cdots
\pa^{i_{n+m}}_{n+m} 
(e_{\alpha}) \right\| \eta^{i_1+\cdots +i_{n+m}} \right\}_{i_1,...,i_{n+m}}$$ 
tends to zero as $i_1,...,i_{n+m} \to\infty$, where $\| \cdot \|$ denotes 
any $p$-adic Banach norm on $\Gamma(\fU_{\lam},E)$ induced by the 
affinoid norm on $\Gamma(\fU_{\lam},\cO)$. 
\end{prop}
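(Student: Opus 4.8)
\textbf{Proof strategy for Proposition \ref{octaylor}.}
The plan is to reduce the statement to Berthelot's description of overconvergence via the de Rham complex on successive strict neighborhoods, and then to reinterpret the vanishing condition on the de Rham differential in terms of the convergence of Taylor series. The starting point is the observation that, under the hypothesis $(*)$, the strict neighborhoods of $\cX_K$ in $\ol{\cX}_K$ are cofinal with the family $\{\fU_\lambda\}_{\lambda \in (0,1)\cap \Gamma^*}$; hence a $\nabla$-module on a strict neighborhood $\fV$ restricts to each $\fU_\lambda$ for $\lambda$ close enough to $1$, and overconvergence can be tested on this family. I would first recall that, by Berthelot's construction, $(E,\nabla)$ is overconvergent precisely when, after possibly shrinking $\fV$, the connection extends compatibly to all $\fU_\lambda$ together with the ``Taylor isomorphism'' on tubular neighborhoods of the diagonal, i.e. the formal Taylor series $\sum_I \frac{1}{I!}\pa^I(e_\alpha)\, \underline{\tau}^I$ (with $\underline\tau = (\tau_1,\dots,\tau_{n+m})$ the standard coordinates on the diagonal neighborhood and $I$ a multi-index) converges on a tube of radius independent of $\lambda$ as $\lambda \to 1$.

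The second step is to make the radius-of-convergence condition explicit. The tube of radius $\eta$ around the diagonal inside $\fU_\lambda \times \fU_\lambda$ is cut out by $|\tau_i| \le \eta$; the Taylor series converges there, as a section of $E$ with its Banach norm on $\Gamma(\fU_\lambda, E)$, if and only if
\[
\left\| \frac{1}{i_1!\cdots i_{n+m}!}\pa_1^{i_1}\cdots \pa_{n+m}^{i_{n+m}}(e_\alpha)\right\| \eta^{i_1+\cdots+i_{n+m}} \to 0
\]
as $i_1,\dots,i_{n+m}\to\infty$. Here one uses that $\Gamma(\fU_\lambda,\cO)$ is an affinoid algebra, so the Banach norm is multiplicative up to the standard comparison and power-bounded elements behave well, which lets one pass between ``the series of sections converges'' and ``the multi-sequence of normed coefficients tends to zero.'' The delicate point is the uniformity in $\lambda$: Berthelot's criterion requires that for each $\eta$ there is a single $\lambda_0$ working for all $\lambda \in [\lambda_0,1)$, and one must check that the norm estimates on $\Gamma(\fU_\lambda,E)$ are compatible as $\lambda$ varies (restriction maps are norm-decreasing, and on the cofinal system the induced Banach norms are equivalent in a controlled way). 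This is exactly the content of the cited passages in \cite{berthelotrig} and \cite{kedlayaI}, so I would invoke them here rather than reprove the comparison.

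The third and final step is to remove the dependence on the chosen generating set $(e_\alpha)_\alpha$ and on the particular Banach norm: since any two $p$-adic Banach norms on a finitely generated module over an affinoid algebra are equivalent, and since $\pa^I$ acts by $\cO$-linear-plus-derivation formulas so that $\pa^I(fe) = \sum_{J\le I}\binom{I}{J}\pa^J(f)\pa^{I-J}(e)$, the vanishing of the multi-sequence for one system of generators forces it for any other, with the scaling parameter $\eta$ adjusted only within $\Gamma^*$; one checks this absorbs into the choice of $\lambda_0$. I expect the main obstacle to be precisely the bookkeeping in this uniformity argument — matching the ``for each $\eta$ there exists $\lambda_0$'' quantifier structure of the statement with the shape of Berthelot's extension-to-a-tube condition — but since Proposition \ref{octaylor} is quoted verbatim from the literature, the proof is ultimately a matter of transcribing and lightly repackaging the arguments of \cite[2.2.13]{berthelotrig} and \cite[2.5.6--8]{kedlayaI}, and I would present it as such.
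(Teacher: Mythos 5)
The paper gives no proof of Proposition \ref{octaylor}: it is stated purely as a citation of Berthelot (\cite[2.2.13]{berthelotrig}) and Kedlaya (\cite[2.5.6--8]{kedlayaI}), and no proof environment follows. Your outline — cofinality of the $\fU_{\lam}$ among strict neighborhoods under hypothesis $(*)$, Berthelot's Taylor-isomorphism formulation of overconvergence, translation of convergence of the Taylor series on a tube of radius $\eta$ into the vanishing condition on the normed coefficients, and independence of the choice of generators via the Leibniz rule, with the uniformity-in-$\lam$ bookkeeping deferred to the cited sources — is a correct summary of what those references do, and it matches the paper's treatment of deferring entirely to them.
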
 

\begin{rem} 
In \cite[2.5.6]{kedlayaI}, the set 
$\fU'_{\lam} := 
\{x \in \ol{\cX}_K\,\vert\, |\prod_{i=1}^nt_i(x)| \geq \lam\}$ 
is used instead of $\fU_{\lam}$. However, this does not cause any problem 
because we have the inclusions $\fU'_{\lam} \subseteq \fU_{\lam} 
\subseteq \fU'_{\lam^n}$. 
\end{rem}

Next let us consider the case where the given formal smooth pair 
$(\cX,\ol{\cX})$ does not necessarily satisfy the condition $(*)$. 
Noting the fact that the functor $\Phi_{(\cX,\ol{\cX})}$ is fully-faithful 
and functorial and noting the descent property of 
the categories $I^{\d}((X,\ol{X})/K)$, $\MIC((\cX_K,\ol{\cX}_K)/K)$  
with respect to Zariski 
coverings of $\ol{\cX}$, we see easily the following (we omit the proof): 

\begin{prop}\label{cov}
Let $(\cX,\ol{\cX})$ be a formal smooth pair and let $(E,\nabla)$ be a 
$\nabla$-module on a strict neighborhood of $\cX_K$ in $\ol{\cX}_K$. 
Then $(E,\nabla)$ is overconvergent if and only if there exists 
an open covering $\ol{\cX} = \bigcup_{\alpha} \ol{\cX}_{\alpha}$ such 
that $(\cX_{\alpha},\ol{\cX}_{\alpha})$ $($where $\cX_{\alpha} := 
\ol{\cX}_{\alpha} \cap \cX$ $)$ satisfies the condition $(*)$ and 
that the restrction of $(E,\nabla)$ to a strict neighborhood of 
$\cX_{\alpha,K}$ in $\ol{\cX}_{\alpha,K}$ is overconvergent. 
\end{prop}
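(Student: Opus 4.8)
\textbf{Proof proposal for Proposition \ref{cov}.}

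The plan is to deduce the statement from three structural facts already available: the full-faithfulness and functoriality of the functor $\Phi_{(\cX,\ol{\cX})}$ (which is asserted just before Proposition \ref{octaylor}), and the descent property of both categories $I^{\d}((X,\ol{X})/K)$ and $\MIC((\cX_K,\ol{\cX}_K)/K)$ with respect to Zariski open coverings of $\ol{\cX}$ (equivalently of $\ol{X}$, since the special fiber has the same Zariski topology). The ``only if'' direction is immediate: if $(E,\nabla)$ lies in the essential image of $\Phi_{(\cX,\ol{\cX})}$, choose any Zariski covering $\ol{\cX}=\bigcup_\alpha\ol{\cX}_\alpha$ by affine opens fine enough that each $(\cX_\alpha,\ol{\cX}_\alpha)$ satisfies $(*)$ — this is possible because locally on an affine one can shrink so that the normal crossing divisor becomes a union of principal smooth divisors and $\Omega^1$ becomes free; then functoriality of $\Phi$ with respect to the open immersion $(\cX_\alpha,\ol{\cX}_\alpha)\hookrightarrow(\cX,\ol{\cX})$ shows the restriction of $(E,\nabla)$ is $\Phi_{(\cX_\alpha,\ol{\cX}_\alpha)}$ applied to the restriction of the isocrystal, hence overconvergent.

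For the ``if'' direction, suppose such a covering $\{\ol{\cX}_\alpha\}$ is given together with overconvergent isocrystals $\cE_\alpha\in I^{\d}((X_\alpha,\ol{X}_\alpha)/K)$ with $\Phi_{(\cX_\alpha,\ol{\cX}_\alpha)}(\cE_\alpha)\cong (E,\nabla)|_{\alpha}$ (restriction to a strict neighborhood of $\cX_{\alpha,K}$ in $\ol{\cX}_{\alpha,K}$). First I would pass to a common refinement and check the cocycle condition: on each double overlap $\ol{\cX}_{\alpha\beta}:=\ol{\cX}_\alpha\cap\ol{\cX}_\beta$, the isomorphisms $\Phi(\cE_\alpha)|_{\alpha\beta}\cong (E,\nabla)|_{\alpha\beta}\cong\Phi(\cE_\beta)|_{\alpha\beta}$ compose to an isomorphism $\Phi_{(\cX_{\alpha\beta},\ol{\cX}_{\alpha\beta})}(\cE_\alpha|_{\alpha\beta})\cong\Phi_{(\cX_{\alpha\beta},\ol{\cX}_{\alpha\beta})}(\cE_\beta|_{\alpha\beta})$; by full-faithfulness of $\Phi_{(\cX_{\alpha\beta},\ol{\cX}_{\alpha\beta})}$ this descends to a unique isomorphism $\varphi_{\alpha\beta}:\cE_\alpha|_{\alpha\beta}\overset{\sim}{\to}\cE_\beta|_{\alpha\beta}$ in $I^{\d}$. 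The cocycle identity $\varphi_{\beta\gamma}\circ\varphi_{\alpha\beta}=\varphi_{\alpha\gamma}$ on triple overlaps follows by applying $\Phi_{(\cX_{\alpha\beta\gamma},\ol{\cX}_{\alpha\beta\gamma})}$, using that it is faithful and that the corresponding identity holds after applying $\Phi$ (both sides become the same composite of the fixed isomorphisms with $(E,\nabla)$). Then the Zariski descent property of $I^{\d}((X,\ol{X})/K)$ glues the $(\cE_\alpha,\varphi_{\alpha\beta})$ into a global overconvergent isocrystal $\cE$ on $(X,\ol{X})/K$. Finally, $\Phi_{(\cX,\ol{\cX})}(\cE)$ and $(E,\nabla)$ have, by construction, isomorphic restrictions to each strict neighborhood coming from $\ol{\cX}_\alpha$ compatibly on overlaps, so the descent property of $\MIC((\cX_K,\ol{\cX}_K)/K)$ produces a global isomorphism $\Phi_{(\cX,\ol{\cX})}(\cE)\cong(E,\nabla)$, proving overconvergence.

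The one point requiring a little care — and the reason the authors can write ``we omit the proof'' — is matching up the descent data on the rigid-analytic side: a $\nabla$-module in $\MIC((\cX_K,\ol{\cX}_K)/K)$ is only defined on \emph{some} strict neighborhood, and morphisms are taken in the colimit over such neighborhoods, so one must verify that the local isomorphisms can be arranged on a common strict neighborhood of $\cX_K$ in $\ol{\cX}_K$ refining all the $\fU_\alpha$; this is where the functoriality of the formation of strict neighborhoods under the open immersions $\ol{\cX}_{\alpha,K}\hookrightarrow\ol{\cX}_K$ (together with finiteness of the covering after a locally finite refinement, using quasi-compactness of $\ol{\cX}$) is used. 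Once that bookkeeping is set up, the gluing is formal. I expect no genuine obstacle here — the substance is entirely carried by the cited full-faithfulness of $\Phi$ and the descent properties — which is consistent with the proposition being stated without proof in the excerpt.
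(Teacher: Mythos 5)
Your argument is correct and follows exactly the route the paper indicates when it says the proposition follows ``easily'' from full-faithfulness and functoriality of $\Phi$ together with Zariski descent for $I^{\d}$ and $\MIC$ — you have simply written out the gluing bookkeeping that the paper omits. The observation about needing to shrink to a common strict neighborhood using quasi-compactness is the right way to handle the colimit structure of $\MIC((\cX_K,\ol{\cX}_K)/K)$.
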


By Proposition \ref{cov}, we see that, to prove the overconvergence of 
$(E,\nabla)$, it suffices to check the property described in Proposition 
\ref{octaylor} locally on $\ol{\cX}$. \par 
Now we give a first step of the proof of Theorem \ref{mainthm}. 

\begin{prop}\label{firstred}
Theorem \ref{mainthm} is true if it is true for formal smooth pairs 
of the form 
$(\fG_{m,O_K}^n \times \fAf_{O_K}^m, 
\fAf_{O_K}^{n+m})\,(n,m\in\N)$. 
\end{prop}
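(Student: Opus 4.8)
The plan is to reduce the general case to the stated model case by a sequence of reductions, each of which preserves the hypothesis that $i^{*}(E,\nabla)$ is overconvergent for all one-dimensional strict locally closed immersions. First I would use Proposition \ref{cov} to reduce to the case where $(\cX,\ol{\cX})$ satisfies condition $(*)$: overconvergence of $(E,\nabla)$ can be checked after restricting to members of a Zariski covering of $\ol{\cX}$ by affine opens of the special type in $(*)$, and restriction to such an open is itself a strict open immersion of formal smooth pairs, so condition (2) is inherited by the restriction (any one-dimensional strict locally closed immersion into $(\cX_{\alpha},\ol{\cX}_{\alpha})$ is one into $(\cX,\ol{\cX})$).

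Next, assuming $(*)$, the divisor $\ol{\cX}-\cX$ is cut out by $t_{1},\dots,t_{n}$ with $dt_{1},\dots,dt_{n+m}$ freeing $\Omega^{1}_{\ol{\cX}_{K}}$, so $(t_{1},\dots,t_{n+m})$ defines an étale morphism $\ol{\cX}\to\fAf^{n+m}_{O_{K}}$ carrying $\ol{\cX}-\cX$ into the union of the first $n$ coordinate hyperplanes, hence carrying $\cX$ into $\fG^{n}_{m,O_{K}}\times\fAf^{m}_{O_{K}}$; thus $f:(\cX,\ol{\cX})\to(\fG^{n}_{m,O_{K}}\times\fAf^{m}_{O_{K}},\fAf^{n+m}_{O_{K}})$ is a strict étale morphism of formal smooth pairs. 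I would like to descend along $f$. The functor $\Phi$ being fully faithful and functorial, and the categories $I^{\d}$ and $\MIC$ satisfying étale descent, a $\nabla$-module on a strict neighborhood of $\cX_{K}$ is overconvergent as soon as its pullback under a suitable étale cover is; so it suffices to exhibit $(E,\nabla)$, up to such a cover, as the pullback of an object on the model pair, or directly to check the Taylor-series condition of Proposition \ref{octaylor} on $\cX$ using that the $\pa_{i}$ and the functions $t_{i}$ are pulled back from the model pair. Concretely, since $f$ is étale the derivations $\pa_{i}$ on $\ol{\cX}$ are precisely the pullbacks of the coordinate derivations on $\fAf^{n+m}_{O_{K}}$ and the $\fU_{\lam}$ for $(\cX,\ol{\cX})$ are the preimages of the corresponding $\fU_{\lam}$ for the model pair; so the convergence estimate for $(E,\nabla)$ on $\cX$ is literally the pullback of such an estimate on the model, and conversely. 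This lets me transfer the validity of Theorem \ref{mainthm} from the model pair to $(\cX,\ol{\cX})$.

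Finally I would have to close the loop: condition (2) for $(E,\nabla)$ on $(\cX,\ol{\cX})$ must be shown to imply, or be equivalent to, condition (2) for the descended data on the model pair — but this is immediate, because a strict locally closed immersion $j:(\cZ,\ol{\cZ})\hra(\fG^{n}_{m,O_{K}}\times\fAf^{m}_{O_{K}},\fAf^{n+m}_{O_{K}})$ with $\dim\ol{\cZ}=1$ can be pulled back along the étale $f$ (replacing $\cZ$ by a connected component of $\cZ\times_{\cX}\ol{\cX}$, still a curve, étale over $\cZ$), and overconvergence descends along that finite étale map of curves; so the hypothesis feeds through. The main obstacle I expect is the descent step along the étale morphism $f$: one must be careful that $f$ need not be finite (it is merely étale, so a Zariski-local-then-étale-local argument, or a further shrinking of $\ol{\cX}$ so that $f$ becomes finite étale onto its image, is needed), and that the identification of strict neighborhoods and of the Taylor data on source and target is compatible with the colimit defining morphisms in $\MIC((\cX_{K},\ol{\cX}_{K})/K)$. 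Once the étale-local nature of the Taylor-series criterion (Proposition \ref{octaylor}, via Proposition \ref{cov}) is pinned down on both sides, the reduction is formal.
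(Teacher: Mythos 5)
Your first reduction step (to condition $(*)$ via Proposition~\ref{cov}) matches the paper's strategy, and your observation that $(*)$ gives a strict \'etale morphism $f:(\cX,\ol{\cX})\to(\fG^n_{m,O_K}\times\fAf^m_{O_K},\fAf^{n+m}_{O_K})$ is correct. However, there is a genuine gap in the middle of the argument, and you put your finger on it yourself without actually resolving it: the map $f$ coming from the chart coordinates is merely \'etale, \emph{not} finite, and the entire mechanism you need --- namely producing from $(E,\nabla)$ a $\nabla$-module on the model pair to which the hypothesized Theorem~\ref{mainthm} can be applied --- requires a push-forward along $f$, which is only available when $f$ is finite. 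Simply shrinking $\ol{\cX}$ so that $f$ becomes finite \'etale onto its image does not help either, because the image would then be some small open subset of $\fAf^{n+m}_{O_K}$, not the full model pair $(\fG^n_{m,O_K}\times\fAf^m_{O_K},\fAf^{n+m}_{O_K})$ to which the hypothesis of the proposition applies. Without an actual $\nabla$-module on the model pair, the phrase ``transfer the validity of Theorem~\ref{mainthm} from the model pair to $(\cX,\ol{\cX})$'' has no content: the hypothesized theorem is a statement about objects on the model, and you never produce one.

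The paper closes this gap with two nontrivial inputs that do not appear in your proposal. First, Kedlaya's theorem \cite[Theorem 2]{kedlayamore} is used to show that, Zariski-locally around any closed point $x$ of $\ol{X}$, one can (after lifting via \cite{kato}) arrange a strict \emph{finite} \'etale morphism $f:(\cX_x,\ol{\cX}_x)\to(\fG^n_{m,O_K}\times\fAf^m_{O_K},\fAf^{n+m}_{O_K})$ --- this is a deep positive-characteristic result (an \'etale analogue of Noether normalization adapted to simple normal crossing divisors), not a formal consequence of condition $(*)$. Second, once $f$ is finite \'etale, the push-forward $f_*(E,\nabla)$ is defined, one shows $(E,\nabla)$ is a direct summand of $f^*f_*(E,\nabla)$ via adjunction and trace, and Tsuzuki's result \cite[5.1]{tsuzuki} on push-forward of overconvergent isocrystals under finite \'etale maps gives the compatibility making $f_*(E,\nabla)$ overconvergent iff $(E,\nabla)$ is. Condition (2) is then transported by pulling back one-dimensional locally closed immersions along $f$ and pushing forward again. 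Your final paragraph gestures at this kind of compatibility, but the missing finiteness and the two cited theorems are precisely what make the reduction work, and the proposal as written does not supply them.
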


\begin{proof}
Let $(\cX,\ol{\cX})$ be a formal smooth pair and let 
$(E,\nabla)$ be a $\nabla$-module on a strict neighborhood of 
$\cX_K$ in $\ol{\cX}_K$ satisfying the condition (2) in Theorem 
\ref{mainthm}. Let $(X,\ol{X})$ be the special fiber of $(\cX,\ol{\cX})$. 
First we prove the following claim: \\
\quad \\
{\bf claim.} \,\,\, 
For any closed point $x$ of $\ol{X}$, there exist 
open immersions $\ol{\cU}_x \hra \ol{\cX}_x \hra \ol{\cX}$ containing $x$, 
an open sub formal scheme $\cX_x$ of $\ol{X}_x$ and a diagram 
of formal smooth pairs 
$$ (\cX,\ol{\cX}) \os{j}{\lla} (\cX_x, \ol{\cX}_x) \os{f}{\lra} 
(\fG_{m,k}^n \times \Af_{k}^m, \fAf_{k}^{n+m}) $$ 
(where $j$ is induced by the open immersion $\ol{\cX}_x \hra 
\ol{\cX}$) for some $n,m$ 
such that $f$ is a strict finite etale morphism and that 
the morphism $(\ol{\cU}_x \cap \cX_x, \ol{\cU}_x) \lra (\ol{\cU}_x \cap \cX, 
\ol{\cU}_x)$ induced by $j$ is an isomorphism. \\

\begin{proof}[Proof of claim] 
Let us put $\cD := \ol{\cX} - \cX$ and let $D$ be the special fiber 
of $\cD$, which is a simple normal crossing divisor in $\ol{X}$. 
Let $D_{\ni x} = \bigcup_{i=1}^n D_i$ be the union of irreducible 
components of $D$ containing $x$ and let $D_{\not\ni x}$ be the 
union of irreducible 
components of $D$ not containing $x$. By applying 
\cite[Theorem 2]{kedlayamore} to $\ol{X} - D_{\not\ni x}$ and the simple 
normal crossing divisor $D - D_{\not\ni x} = D_{\ni x} - D_{\not\ni x}$ 
on it, we see that there exists an open subscheme $\ol{X}_x$ in 
$\ol{X} - D_{\not\ni x}$ containing $x$ and a finite etale morphism 
$f_0: \ol{X}_x \lra \Af^{n+m}_k$ for some $m$ such that, for 
$1 \leq i \leq n$, $f_0(D_i \cap \ol{X}_x)$ is contained in 
the $i$-th coordinate hyperplane $H_i$ of $\Af^{n+m}_k$. Then 
$D_i \cap \ol{X}_x \subseteq f_0^{-1}(H_i)$ 
is an open and closed immersion, and so 
$D \cap \ol{X}_x = D_{\ni x} \cap \ol{X}_x$ is a sub simple normal 
crossing divisor of $\bigcup_{i=1}^n f_0^{-1}(H_i)$ such that the 
complementary divisor does not contain $x$. \par 
Let $f': \ol{\cX}'_x \lra 
\fAf^{n+m}_{O_K}$ be a finite etale morphism of formal schemes lifting 
$f_0$ and let $\cH_i \subseteq \fAf^{n+m}_{O_K}\,(1 \leq i \leq n)$ be 
the $i$-th coordinate hyperplane. Then 
$\bigcup_{i=1}^n {f'}^{-1}(\cH_i)$ is a relative simple normal crossing 
divisor in $\ol{\cX}'_x$ lifting $\bigcup_{i=1}^n f_0^{-1}(H_i)$ and so 
there exists a sub relative simple normal crossing divisor of 
 $\bigcup_{i=1}^n f_0^{-1}(H_i)$ lifting $D \cap \ol{X}_x$, which we 
denotes by $\cD'_x$. \par 
On the other hand, let $\ol{\cX}_x$ be the open sub formal scheme of 
$\ol{\cX}$ lifting $\ol{X}_x$ and put $\cD_x := \cD \cap \ol{\cX}_x$. 
Then both $(\ol{\cX}'_x, \cD'_x)$, $(\ol{\cX}_x,\cD_x)$ lifts 
$(\ol{X}_x, D \cap \ol{X}_x)$. Considering them as fine log formal 
schemes log smooth over $\Spf O_K$, we see by \cite{kato} that they are 
isomorphic. In particular, we have the isomorphism 
$$ \iota: (\cX \cap \ol{\cX}_x, \ol{\cX}_x) = 
(\ol{\cX}_x - \cD_x, \ol{\cX}_x) \os{=}{\lra} 
(\ol{\cX}'_x - \cD'_x, \ol{\cX}'_x) $$
and $\cD_x$ is a sub relative simple normal crossing divisor of 
$\bigcup_{i=1}^n \iota^{-1}{f'}^{-1}(\cH_i)$. Let us denote the 
complementary divisor by $\cC$. \par 
Now let us put 
$\cX_x := \ol{\cX}_x - \bigcup_{i=1}^n \iota^{-1}{f'}^{-1}(\cH_i)$, 
let $f$ be the composite 
\begin{align*}
(\cX_x,\ol{\cX}_x) & \os{\cong}{\lra} (\ol{\cX}'_x - 
\bigcup_{i=1}^n {f'}^{-1}(\cH_i), \ol{\cX}'_x) \\ 
& \os{f'}{\lra} 
(\fAf^{n+m}_{O_K} - \bigcup_{i=1}^n \cH_i, \fAf^{n+m}_{O_K}) = 
(\fG_{m,O_K}^n \times \fAf^m_{O_K}, \fAf^{n+m}_{O_K}) 
\end{align*}
(where the first morphism is induced by $\iota$ and the second one is 
induced by $f'$) and let $j$ be the composite 
$$ (\cX_x,\ol{\cX}_x) \lra (\ol{\cX}_x - \cD_x, \ol{\cX}_x) 
= (\cX \cap \ol{\cX}_x, \ol{\cX}_x) \os{\subset}{\lra} (\cX,\ol{\cX}) $$
induced by the canonical inclusions. Also, let us put 
$\ol{\cU}_x := \ol{\cX}_x - \cC$. Then we see easily that they
satisfy the required properties. So the proof of the claim is finished. 
\end{proof}

Let us return to the proof of the proposition. For any closed point 
$x$ in $\ol{X}$, let us choose 
open immersions $\ol{\cU}_x \hra \ol{\cX}_x \hra \ol{\cX}$ containing $x$, 
an open sub formal scheme $\cX_x$ of $\ol{X}_x$ and morphisms $j,f$ as in 
the above claim. Let us note that the following are equivalent: 
\begin{enumerate}
\item[(a)] $(E,\nabla)$ is overconvergent. 
\item[(b)] For any closed point $x$ in $\ol{X}$, the restriction of 
$(E,\nabla)$ to $\MIC((\cX_{x,K},\ol{\cX}_{x,K})/K)$ is overconvergent. 
\item[(c)] For any closed point $x$ in $\ol{X}$, the restriction of 
$(E,\nabla)$ to $\MIC((\cU_{x,K} \cap \cX_K ,\ol{\cU}_{x,K})\allowbreak 
/K) = 
\MIC((\cU_{x,K} \cap \cX_{x,K} ,\ol{\cU}_{x,K})/K)$ is overconvergent. 
\end{enumerate}
Indeed, (a)$\,\Lra\,$(b)$\,\Lra\,$(c) is obvious, and since each 
$(\cU_{x} \cap \cX_{x} ,\ol{\cU}_{x})$ satisfies the condition $(*)$, 
we have (c)$\,\Lra\,$(a) by Proposition \ref{cov}. By this equivalence, 
we can replace $(\cX, \ol{\cX})$ by $(\cX_x, \ol{\cX}_x)$ to prove the 
overconvergence of $(E,\nabla)$. So we may assume that there exists a 
strict finite etale morphism $f: (\cX, \ol{\cX}) \lra 
(\fG_{m,k}^n \times \fAf_{k}^m, \fAf_{k}^{n+m})$ to prove 
the proposition. \par 
In the following, we put 
$(\cX_0,\ol{\cX}_0) := (\fG_{m,O_K}^n \times \fAf_{O_K}^m, \fAf_{O_K}^{n+m}), 
(X_0,\ol{X}_0) := (\G_{m,k}^n \times \Af_{k}^m, \Af_{k}^{n+m})$ and 
let $f_K: \ol{\cX}_K \lra 
\ol{\cX}_{0,K}$ be the morphism of rigid spaces associated to $f$.  
Let $t_1,...,t_n$ be the coordinate of $\fG_{m,O_K}^n$ and let us define 
$\fU_{0,\lam} \subseteq \ol{\cX}_{0,K}, \fU_{\lam} \subseteq \ol{\cX}_K$ 
by 
$$ 
\fU_{0,\lam} := \{x \in \ol{\cX}_{0,K}\,\vert\, |t_i(x)| \geq \lam \,\, 
\text{for all $1 \leq i \leq n$}\}, $$
$$ 
\fU_{\lam} := \{x \in \ol{\cX}_K\,\vert\, |t_i(x)| \geq \lam \,\, 
\text{for all $1 \leq i \leq n$}\}. $$
Then $f_K$ induces a finite etale morphism 
$f_K: \fU_{\lam} \lra \fU_{0,\lam}$ between affinoid rigid spaces, 
that is, $\Gamma(\fU_{\lam},\cO)$ is finite etale algebra 
over $\Gamma(\fU_{0,\lam},\cO)$. \par 
For some $\lam$, $(E,\nabla)$ is defined on $\fU_{\lam}$. 
Then we can define the push-forward $f_*(E,\nabla)$ of $(E,\nabla)$ by 
$f$, which is a $\nabla$-module on $\fU_{0,\lam}$ by 
$$ f_*(E,\nabla) := (f_*E, f_*E \os{f_*\nabla}{\lra} 
f_*(E \otimes \Omega^1_{\fU_{\lam}}) = 
f_*E \otimes \Omega^1_{\fU_{0,\lam}}). $$
It is easy to see the following properties: 
\begin{enumerate}
\item 
For $\nabla$-modules $(F,\nabla_F)$, $(F',\nabla_{F'})$ on $\fU_{\lam}$,  
there exists a canonical isomorphism 
$$ f_*: \Hom_{\fU_{\lam}}((F,\nabla_F),(F',\nabla_{F'})) \os{=}{\lra} 
\Hom_{\fU_{0,\lam}}(f_*(F,\nabla_F),f_*(F',\nabla_{F'})). $$
\item 
For a $\nabla$-module $(F,\nabla_F)$ on $\fU_{0,\lam}$, 
there exists functorially 
the adjunction map $\ad: (F,\nabla_F) \lra f_*f^*(F,\nabla_F)$ 
and the trace map $\tr: f_*f^*(F,\nabla_F) \lra (F,\nabla_F)$ such that 
the composition $\tr \circ \ad$ is equal to the multiplication by 
the 
degree $d$ of $\ol{\cX}$ over $\ol{\cX}_0$.) 
\end{enumerate}
By (2) above, we have the morphisms 
$\ad: f_*(E,\nabla) \lra f_*f^*f_*(E,\nabla)$, 
$\tr: f_*f^*f_*(E, \allowbreak 
\nabla) \lra f_*(E, \nabla)$ with $\tr\circ\ad = d$ 
and by (1), 
we have $\alpha: (E,\nabla) \lra f^*f_*(E,\nabla)$, 
$\beta: f^*f_*(E,\nabla) \lra (E,\nabla)$ with $f_*\alpha=\ad, 
f_*\beta=\tr$. Then we have $\beta \circ \alpha = d$ and so 
$(E,\nabla)$ is a direct summand of $f^*f_*(E,\nabla)$. \par 
Now we prove that $(E,\nabla)$ is overconvergent if and only if 
$f_*(E,\nabla)$ is overconvergent. Indeed, if $f_*(E,\nabla)$ is 
overconvergent, then so is $f^*f_*(E,\nabla)$ and since 
$(E,\nabla)$ is a direct summand of $f^*f_*(E,\nabla)$, 
$(E,\nabla)$ is also overconvergent (that is, it satisfies the 
condition given in Proposition \ref{octaylor}.) On the other hand, 
by \cite[5.1]{tsuzuki}, 
the push-forward functor $f_*: I^{\d}((X,\ol{X})/K) \lra 
I^{\d}((X_0,\ol{X}_0)/K)$ 
for overconvergent isocrystals such that the diagram 
\begin{equation*}
\begin{CD}
I^{\d}((X,\ol{X})/K) @>{\Phi_{(\cX,\ol{\cX})}}>> 
\MIC((\cX_K,\ol{\cX}_K)/K) \\ 
@V{f_*}VV @V{f_*}VV \\ 
I^{\d}((X_0,\ol{X}_0)/K) 
@>{\Phi_{(\cX_0,\ol{\cX}_0)}}>> 
\MIC(((\cX_{0,K},\ol{\cX}_{0,K})/K) \\ 
\end{CD}
\end{equation*}
is commutative, where $f_*$ in the right vertical arrow is 
the functor $(F,\nabla_F) \mapsto f_*(F,\nabla_F)$ defined above. 
This diagram implies that $f_*(E,\nabla)$ is overconvergent 
if so is $(E,\nabla)$. \par 
Now let us prove the proposition. Let $(E,\nabla)$ be as in the 
beginning of the proof. Let 
$i_0: (\cY_0,\ol{\cY}_0) \hra (\cX_0,\ol{\cX}_0)$ 
be any locally closed immersion of formal smooth pairs 
with $\dim \ol{\cY}_0 =1$ and let 
$i:(\cY,\ol{\cY}) \hra (\cX,\ol{\cX})$ be the pull-back of $i_0$ by $f$. 
Then $i^*(E,\nabla)$ is overconvergent by assumption, and by the argument 
similar to the previous paragraph ($f$ replaced by $f|_{\ol{\cY}}$), 
$(f|_{\ol{\cY}})_*i^*(E,\nabla) = i_0^*f_*(E,\nabla)$ is also 
overconvergent. Since $i_0$ was arbitrary, Theorem \ref{mainthm} for 
$(\cX_0,\ol{\cX}_0)$ (which we assumed) 
implies the overconvergence of $f_*(E,\nabla)$ and hence 
$(E,\nabla)$ is also overconvergent. So we have proved the proposition. 
\end{proof}

Next we recall the definition of intrinsic generic radius of 
convergence of $\nabla$-modules on polyannuli, which is due to 
Kedlaya-Xiao \cite{kedlayaxiao}. 
Let $L$ be a field containing $K$ 
complete with respect to a norm 
(denoted also by $|\cdot|$) which extends the given absolute 
value of $K$. A subinterval $I \subseteq [0,\infty)$ is called 
aligned if any endpoint of $I$ at which it is closed is contained in 
$\Gamma^*$ and for an aligned interval $I$, we 
define the rigid space $A^n_L(I)$ by  
$A^n_L(I) := \{(t_1, ..., t_n) \in {\Bbb A}^{n,\an}_L 
\,\vert\, \forall i, |t_i| \in I \}.$ \par 
For a formal smooth pair $(\cX_0, \ol{\cX}_0) := 
(\fG^n_{m,O_K} \times \fAf^m_{O_K}, \fAf^m_{O_K})$ and 
the strict neighborhood 
$\fU_{0,\lam}$ of $\cX_{0,K}$ $\ol{\cX}_{0,K}$ defined in the proof of 
Proposition \ref{firstred}, we have 
$\cX_{0,K} = A^n_K[1,1] \times A^m_K[0,1], 
\ol{\cX}_{0,K} = A^{n+m}[0,1]$ and 
$\fU_{0,\lam} = A^n[\lam,1] \times A^m[0,1]$. So it is important 
to study $\nabla$-modules on these polyannuli. \par 
So let $L$ be as above, let $n,m \in \N$, let $\lam \in [0,1]\cap\Gamma^*$ 
and let $(E,\nabla)$ be a $\nabla$-module on $A_L^n[\lam,1] \times 
A^m_L[0,1]$. Let us denote the coordinate of $A_L^n[\lam,1] \times 
A^m_L[0,1]$ by $t_1,...,t_{n+m}$ and put $\pa_i := 
\pa/\pa t_i \,(1 \leq i \leq n+m)$. For 
$\rrho := (\rho_i) \in [\lam,1]^n \times [0,1]^m$, let $L(t)_{\rrho}$ 
be the completion of $L(t) := L(t_1,...,t_{n+m})$ by 
$\rrho$-Gauss norm. Then $L(t)_{\rrho}$ is endowed with derivations
 $\pa_i \,(1 \leq i \leq n+m)$ and so we can define the 
spectral norm of $\pa_i$ on $L(t)_{\rrho}$, which we denote by 
$|\pa_i|_{\rrho,L,\sp}$. It is easy to see that 
$|\pa_i|_{\rrho,L,\sp} = p^{-1/(p-1)}\rho_i^{-1}$ in our case. 
On the other hand, $(E,\nabla)$ induces 
a differential module $E_{\rrho}$ on $L(t)_{\rrho}$ with respect to 
$\pa_i\,(1 \leq i \leq n+m)$ and so we can define the spectral norm 
of $\pa_i$ on $E_{\rrho}$, which we denote by $|\pa_i|_{\rrho,E,\sp}$. 
Then we define the intrinsic generic radius of convergence 
$IR(E,\rrho)$ of $E$ with radius $\rrho$ by 
$$ IR(E,\rrho) 
= \min_{i} \{|\pa_i|_{\rrho,L,\sp}/|\pa_i|_{\rrho,E,\sp}\} 
= \min_{i} \{p^{-1/(p-1)}\rho_i^{-1}|\pa_i|^{-1}_{\rrho,E,\sp}\}. $$
Since it is known (\cite[6.2.4]{kedlayabook}) that we always have 
$p^{-1/(p-1)}\rho_i^{-1}|\pa_i|^{-1}_{\rrho,E,\sp} \leq 1$, 
$IR(E,\rrho) \leq 1$. If $\e := (\e_1,...,\e_{\mu})$ is a 
basis of $E_{\rrho}$ and $G_{i,n}$ is the matrix expression 
of the operator $\pa_i^n$ on $E_{\rrho}$ with respect to the basis $\e$, 
we have the equality 
$$ p^{-1/(p-1)}\rho_i^{-1}|\pa_i|^{-1}_{\rrho,E,\sp} 
= \min \{1, p^{-1/(p-1)}\rho_i^{-1}
\ul{\lim}_{n\to\infty}|G_{i,n}|^{-1/n}_{\rrho} \}, $$
where $|\cdot|_{\rrho}$ denotes the maximum of the $\rrho$-Gauss norms of 
the entries. (See \cite[6.2.5]{kedlayabook}.) \par 
The following is one of the main results of \cite{kedlayaxiao}, which we 
use later. 

\begin{prop}[Kedlaya-Xiao]\label{kedlayaxiaoprop}
The function $\rrho \mapsto IR(E,\rrho)$ is continuous.
\end{prop}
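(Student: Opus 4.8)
The plan is to reduce the statement to the continuity of the finitely many functions $\rrho \mapsto p^{-1/(p-1)}\rho_i^{-1}|\pa_i|^{-1}_{\rrho,E,\sp}$, since $IR(E,\rrho)$ is the minimum of these, and the minimum of finitely many continuous functions is continuous. Fix therefore an index $i$ and a basis $\e = (\e_1,\dots,\e_\mu)$ of $E$ over the ring of analytic functions on $A^n_L[\lam,1]\times A^m_L[0,1]$ (which we may take global since $E$ is locally free and, after shrinking/covering, free; in fact continuity is a local property on the polyannulus so it suffices to treat the free case). Let $G_{i,k}$ be the matrix of $\pa_i^k$ in this basis. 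By the displayed formula recalled just before the proposition,
$$ p^{-1/(p-1)}\rho_i^{-1}|\pa_i|^{-1}_{\rrho,E,\sp} = \min\Bigl\{1,\ p^{-1/(p-1)}\rho_i^{-1}\,\ul{\lim}_{k\to\infty}|G_{i,k}|^{-1/k}_{\rrho}\Bigr\}, $$
so it is enough to show that $\rrho \mapsto \ul{\lim}_{k\to\infty}|G_{i,k}|^{-1/k}_{\rrho}$ is continuous (then multiply by the continuous function $p^{-1/(p-1)}\rho_i^{-1}$ and take the min with the constant $1$).

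The key input is that for a \emph{fixed} analytic function $g$ on the polyannulus, the function $\rrho \mapsto |g|_{\rrho}$ (the $\rrho$-Gauss norm) is continuous on $[\lam,1]^n\times[0,1]^m$: writing $g = \sum_{\vv} a_{\vv} t^{\vv}$, we have $|g|_{\rrho} = \sup_{\vv}|a_{\vv}|\rrho^{\vv}$, and convergence of $g$ on the closed polyannulus forces the terms $|a_{\vv}|\rrho^{\vv}$ to be dominated uniformly near any fixed $\rrho_0$, so the sup is a locally uniform limit of the continuous finite partial sups, hence continuous; in particular $\log|g|_{\rrho}$ is a locally uniform limit of convex (in $\log\rho_j$) functions, so $|g|_{\rrho}$ varies continuously. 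Consequently each $\rrho \mapsto |G_{i,k}|_{\rrho}$ is continuous, being the max of finitely many entrywise Gauss norms. One then needs to pass from this to continuity of the $\ul{\lim}$ of the $k$-th roots. The standard way, following Kedlaya, is to first observe subadditivity: since $\pa_i^{k+l} = \pa_i^k \circ \pa_i^l$, a Leibniz-type estimate gives $|G_{i,k+l}|_{\rrho} \leq |G_{i,k}|_{\rrho}\,|G_{i,l}|_{\rrho}\cdot C_{\rrho}$ for a mild correction factor (coming from differentiating the connection matrix), so that $\log|G_{i,k}|_{\rrho}$ is, up to a harmless $O(k)$ term controlled by continuous functions of $\rrho$, subadditive in $k$; Fekete's lemma then identifies the $\ul{\lim}$ with an infimum, $\ul{\lim}_k |G_{i,k}|^{1/k}_{\rrho} = \inf_k (|G_{i,k}|_{\rrho}\cdot(\text{correction}))^{1/k}$. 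An infimum of continuous functions is upper semicontinuous, which gives one inequality for free; for the reverse (lower semicontinuity) one uses that the radius-of-convergence function is already known to be \emph{log-concave} / satisfies a convexity property in $\log\rrho$ after restricting to one-parameter subfamilies, and a convex function on an interval that is the pointwise infimum of continuous convex functions is continuous on the interior, with the behaviour at closed (aligned) endpoints handled by the alignment hypothesis.

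The main obstacle is exactly this last point: establishing the lower semicontinuity (equivalently, ruling out upward jumps of $IR$) genuinely uses the convexity/transfer theorem for $p$-adic differential modules on annuli, not just naive manipulation of Gauss norms, and it is here that the one-variable theory (Dwork–Robba, Christol–Dwork, as packaged in Kedlaya's book) must be invoked along generic lines and then spread out over the polyannulus. So the realistic structure of the proof is: (i) reduce to one index $i$ and to the free case, local on the base; (ii) record continuity of individual Gauss norms; (iii) use subadditivity + Fekete to write the spectral quantity as an infimum, yielding upper semicontinuity; (iv) invoke the one-variable convexity (log-concavity of $IR$ along coordinate directions) to upgrade to genuine continuity, with the closedness of aligned endpoints ensuring no boundary pathology; (v) take the minimum over $i$. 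I expect step (iv) to be where the real content lies and where one must lean most heavily on \cite{kedlayaxiao} and \cite{kedlayabook}.
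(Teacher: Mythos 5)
The paper does not prove Proposition~\ref{kedlayaxiaoprop} at all; the proposition carries the attribution ``[Kedlaya-Xiao]'' and is used as a black box cited from \cite{kedlayaxiao}, with no \texttt{proof} environment attached (the proof that follows in the source belongs to Corollary~\ref{kedlayaxiaocor}). So there is no argument in the paper for you to have been compared against, and the appropriate ``proof'' here is simply the citation.

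As for your reconstruction: its overall shape is consistent with what Kedlaya--Xiao actually do, and steps (i)--(iii) and (v) are fine in outline (continuity of individual Gauss norms, Fekete/submultiplicativity turning the spectral norm into an infimum of continuous functions, hence one semicontinuity of $IR$ for free, then take a finite $\min$). But step (iv), which you yourself flag as where ``the real content lies,'' is not a proof --- it is an appeal to the log-concavity/convexity of the radius function, which is itself one of the principal theorems of \cite{kedlayaxiao} (building on the one-variable transfer theorems in \cite{kedlayabook}). As written, the argument is therefore circular if offered as a proof of the proposition, rather than a reduction of continuity to convexity within the same source. That is acceptable as a reading of how the result is organized in \cite{kedlayaxiao}, but it should be presented as such, not as an independent proof. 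A secondary point worth tightening if you ever flesh this out: the ``harmless $O(k)$ correction'' in the submultiplicativity estimate needs to be justified uniformly in $\rrho$ on compact subsets for the Fekete step and the locally uniform comparison to go through, and the passage between the entrywise matrix Gauss norm $|G_{i,k}|_\rrho$ and the operator norm (which is what is genuinely submultiplicative) should be made explicit.
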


\begin{cor}\label{kedlayaxiaocor}
Fix $1 \leq i \leq n+m$ and 
for $\rho \in [0,1]$, let us denote $(1,...,1,\os{i}{\check{\rho}},
1,...,1)$ by $\rrho$. Then the function 
$\rho \mapsto p^{-1/(p-1)}\rho^{-1}|\pa_i|^{-1}_{\rrho,E,\sp}$ is 
continuous. 
\end{cor}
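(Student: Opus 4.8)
Here the plan is to deduce the statement from the full continuity assertion of Proposition \ref{kedlayaxiaoprop} by reducing to a one-variable situation over an enlarged base field. The obstruction to reading it off directly from Proposition \ref{kedlayaxiaoprop} is that $IR(E,\rrho)$ is the \emph{minimum} over all $n+m$ coordinate directions, so one cannot isolate the $i$-th term $h_i(\rho):=p^{-1/(p-1)}\rho^{-1}|\pa_i|^{-1}_{\rrho,E,\sp}$ (with $\rrho=(1,\dots,1,\os{i}{\check{\rho}},1,\dots,1)$) from it; the remedy is to ``specialize'' all coordinates $t_j$ with $j\neq i$ to the Gauss point of radius $1$ and to study only the $t_i$-direction.

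First I would introduce the complete valued field $M$ obtained as the completion of the rational function field $L(t_j : 1\le j\le n+m,\ j\neq i)$ with respect to the Gauss norm for which $|t_j|=1$ for all $j\neq i$; this is a complete extension of $L$ (hence of $K$), so Proposition \ref{kedlayaxiaoprop} applies over $M$. Put $\lam':=\lam$ if $i\le n$ and $\lam':=0$ if $i>n$. The constants $t_j\in M$ ($j\neq i$), together with the coordinate $t_i$, define a morphism of rigid spaces $A^1_M[\lam',1]\lra A^n_L[\lam,1]\times A^m_L[0,1]$, and pulling back $(E,\nabla)$ along it yields a $\nabla$-module $(E_M,\nabla)$ on the one-variable polyannulus $A^1_M[\lam',1]$, with derivation $\pa_i=\pa/\pa t_i$.

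Next I would identify $h_i(\rho)$ with the intrinsic generic radius of convergence of $E_M$. For each $\rho$ in the relevant interval, the $\rho$-Gauss completion $M(t_i)_\rho$ is canonically and isometrically identified with $L(t)_{\rrho}$, compatibly with the action of $\pa_i$ --- this is the standard fact that an iterated Gauss norm equals the corresponding multi-Gauss norm, together with density of $L(t)$ in $M(t_i)_\rho$. Under this identification the differential module $(E_M)_\rho$ over $M(t_i)_\rho$ is precisely $E_{\rrho}$, hence $|\pa_i|_{\rho,E_M,\sp}=|\pa_i|_{\rrho,E,\sp}$, and therefore $IR(E_M,\rho)=h_i(\rho)$ for all such $\rho$. (Alternatively one checks this equality via the matrix formula displayed just before Proposition \ref{kedlayaxiaoprop}: a basis of $E$ near the Gauss point of radius $\rrho$ induces a basis of $E_M$, and the matrices $G_{i,n}$ of $\pa_i^n$ together with their Gauss norms are unchanged.) Applying Proposition \ref{kedlayaxiaoprop} to $(E_M,\nabla)$ over $M$, with $(n,m)$ there replaced by $(1,0)$ if $i\le n$ and by $(0,1)$ if $i>n$, we conclude that $\rho\mapsto IR(E_M,\rho)=h_i(\rho)$ is continuous, which is the assertion.

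The main obstacle is this last identification: making precise that $(E,\nabla)$ pulls back to a genuine $\nabla$-module on the one-variable polyannulus over $M$, and --- more delicately --- checking the isometric identification $M(t_i)_\rho\cong L(t)_{\rrho}$ compatibly with $\pa_i$ and with the passage from $E$ to $E_M$, \emph{uniformly in $\rho$}, so that the two a priori unrelated radii of convergence coincide as functions of $\rho$. Granting that, the corollary follows at once from Proposition \ref{kedlayaxiaoprop}.
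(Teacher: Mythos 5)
Your proof is correct and follows essentially the same route as the paper: the paper likewise introduces the field $\ti{L}$ (your $M$) as the $(1,\dots,1)$-Gauss completion of $L(t_1,\dots,\check{t}_i,\dots,t_{n+m})$, restricts $(E,\nabla)$ to a $\nabla$-module $(\ti{E},\ti{\nabla})$ on a one-variable annulus over $\ti{L}$, uses the equality $|\pa_i|_{\rrho,E,\sp}=|\pa_i|_{\rho,\ti{E},\sp}$ to identify $h_i(\rho)$ with $IR(\ti{E},\rho)$, and then invokes Proposition \ref{kedlayaxiaoprop}. Your elaboration on why the iterated Gauss norm matches the multi-Gauss norm is a worthwhile expansion of a step the paper treats as immediate, but the argument is the same.
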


\begin{proof}
Let $\ti{L}$ be the completion of $L(t_1,...,\check{t}_i,...,t_{n+m})$ 
with respect to $(1,...,1)$-Gauss norm. Then $(E,\nabla)$ induces 
the $\nabla$-module $(\ti{E},\ti{\nabla})$ on $A^1_{\ti{L}}[\lam,1]$ 
(when $1 \leq i \leq n$) or on $A^1_{\ti{L}}[0,1]$ 
(when $n+1 \leq i \leq n+m$), and we have the equality 
$|\pa_i|_{\rrho,E,\sp} = |\pa_i|_{\rho,\ti{E},\sp}$. Hence we have 
$p^{-1/(p-1)}\rho^{-1}|\pa_i|^{-1}_{\rrho,E,\sp} = R(\ti{E},\rho)$ and hence 
it is continuous by Proposition \ref{kedlayaxiaoprop}. 
\end{proof}

Let $(\cX_0, \ol{\cX}_0)$ be $(
(\fG^n_{m,O_K} \times \fAf^m_{O_K}, \fAf^m_{O_K})$ and define 
the strict neighborhhood $\fU_{0,\lam}$ of $\cX_{0,K}$ in 
$\ol{\cX}_{0,K}$ as before. For $\nabla$-modules on 
$\fU_{0,\lam,K} = A^n_K[\lam,1] \times A^m_K[0,1]$, 
the overconvergence is described in terms of intricsic generic radius 
of convergence, as follows: 

\begin{prop}\label{oc-ir}
Let $(\cX_0,\ol{\cX}_0)$, $\fU_{0,\lam}$ be as above and let $(E,\nabla)$ 
be a $\nabla$-module on $\fU_{0,K}$. Then the following are equivalent$:$ 
\begin{enumerate}
\item 
$(E,\nabla)$ is overconvergent. 
\item 
$IR(E,\1)=1$, where $\1 := (1,...,1)$. 
\end{enumerate}
\end{prop}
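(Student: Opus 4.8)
### Proof proposal for Proposition \ref{oc-ir}

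The plan is to deduce the equivalence from Proposition \ref{octaylor}, which gives a Taylor-series characterisation of overconvergence, together with the relation between growth of Taylor coefficients and the spectral norms $|\pa_i|_{\rrho,E,\sp}$ recorded in the discussion preceding Proposition \ref{kedlayaxiaoprop}. The key observation is that for the pair $(\cX_0,\ol{\cX}_0)$ the set $\fU_{0,\lam}$ equals the polyannulus $A^n_K[\lam,1]\times A^m_K[0,1]$, the generators $t_1,\dots,t_n$ cutting out $\ol{\cX}_0-\cX_0$ are the first $n$ coordinates, and there are no extra coordinates $t_{n+1},\dots$ beyond the $m$ already present; so the condition in Proposition \ref{octaylor} becomes: for every $\eta\in(0,1)\cap\Gamma^*$ there is $\lam_0$ such that for $\lam\in[\lam_0,1)\cap\Gamma^*$ the multisequence $\|\tfrac{1}{\mathbf{i}!}\pa^{\mathbf{i}}(e_\alpha)\|_{\fU_{0,\lam}}\,\eta^{|\mathbf{i}|}$ tends to $0$.

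First I would reduce the multi-index statement to one index at a time: a multisequence of nonnegative reals indexed by $\N^{n+m}$ with the displayed product form tends to $0$ as all indices go to $\infty$ if and only if, after fixing a basis, the single-variable growth rates are controlled, i.e. $\limsup_{i_j\to\infty}\|\tfrac{1}{i_j!}\pa_j^{i_j}(e_\alpha)\|^{1/i_j}\le \eta^{-1}$ for each $j$ — this is just the standard fact that $p$-adic convergence of a power series in several variables on a polydisc is governed coordinatewise. Then I would translate each such single-variable growth rate into the spectral norm language of \cite[6.2.4--6.2.5]{kedlayabook}: the quantity $\ul\lim_n |G_{j,n}|_{\rrho}^{1/n}$ measures exactly $\|\pa_j^n\|^{1/n}$ on $E_\rrho$ up to the factor $n!$, so letting $\rrho\to\1$ and using that the Gauss norm at $\1$ is the sup norm on $\fU_{0,\lam}$ as $\lam\to 1$, the Taylor condition for all $\eta<1$ becomes precisely $p^{-1/(p-1)}|\pa_j|^{-1}_{\1,E,\sp}\ge 1$ for all $j$, hence $=1$ for all $j$ (the reverse inequality being automatic by \cite[6.2.4]{kedlayabook}), i.e. $IR(E,\1)=1$.

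For the direction (2)$\Rightarrow$(1) I would argue as follows. If $IR(E,\1)=1$, then by Corollary \ref{kedlayaxiaocor} the functions $\rho\mapsto p^{-1/(p-1)}\rho^{-1}|\pa_j|^{-1}_{\rrho(\rho),E,\sp}$ are continuous and equal $1$ at $\rho=1$ for each $j$; since $IR\le 1$ always, continuity forces these functions to be $\ge 1-\epsilon$ on a neighbourhood $[\lam_0,1]$ of $1$, which is exactly the uniform control in $\lam$ over $\fU_{0,\lam}$ needed to verify the growth condition of Proposition \ref{octaylor} for a given $\eta$. Passing $\eta\to 1$ via $\lam_0=\lam_0(\eta)$ gives overconvergence. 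Conversely (1)$\Rightarrow$(2) reverses this: overconvergence gives, for each $\eta<1$, the uniform coefficient bound on some $\fU_{0,\lam}$, which forces $|\pa_j|^{-1}_{\1,E,\sp}\ge p^{-1/(p-1)}\eta$ for all $\eta<1$, hence $IR(E,\1)=1$.

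The main obstacle I expect is bookkeeping the two limiting processes — $\rrho\to\1$ (i.e. $\lam\to 1$) and $\eta\to 1$ — and making precise that the Gauss norm $|\cdot|_\1$ on $L(t)_\1$ agrees with the limit of the supremum norms on $\Gamma(\fU_{0,\lam},\cO)$, together with the comparison between the generic spectral norm of $\pa_j$ (computed via the Gauss norm at $\1$) and the actual operator norm on the affinoid $\fU_{0,\lam}$. This is where one needs the specific geometry of the polyannulus, the formula $|\pa_j|_{\rrho,L,\sp}=p^{-1/(p-1)}\rho_j^{-1}$, and the continuity statement of Corollary \ref{kedlayaxiaocor} to upgrade a pointwise equality at $\1$ to a uniform estimate on a one-sided neighbourhood.
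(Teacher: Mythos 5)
Your proposal follows essentially the same pipeline as the paper's proof: start from the Taylor-series criterion of Proposition \ref{octaylor}, reduce the multi-index growth condition to single-index conditions (using commutativity of the $\pa_i$'s to bound $\|\pa^j\|$ by the product of $\|\pa_l^{j_l}\|$), translate affinoid/sup norms into Gauss norms and thence into spectral norms via \cite[6.2.4--6.2.5]{kedlayabook}, and invoke the Kedlaya--Xiao continuity of $IR$ to pass between a uniform estimate on a one-sided neighbourhood of $\1$ and the pointwise condition $IR(E,\1)=1$. One small point to sharpen: the paper uses \cite[3.1.8]{kedlayaI} to express the sup norm on $\fU_{0,\rho}$ as the maximum of Gauss norms over \emph{all} corners $\rrho\in\{\rho,1\}^n\times\{1\}^m$, not just along single-coordinate paths, so the cleaner tool is the full multi-variable continuity of Proposition \ref{kedlayaxiaoprop} rather than only Corollary \ref{kedlayaxiaocor}; but this is a bookkeeping matter rather than a different method.
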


\begin{proof}
In the proof, we will use multi-index notation as follows: For 
$j:=(j_1,...,j_{n+m}) \in \N^{n+m}$, we put 
$|j| := j_1 + \cdots + j_{n+m}$, 
$j! :=j_1!\cdots j_{n+m}!$ and 
$\pa^j := \pa_1^{j_1} \cdots \pa_{n+m}^{j_{n+m}}$. 
Fix a set of generators $(e_{\alpha})_{\alpha}$ in $\Gamma(\fU_{0,\lam},E)$. 
Then, by Proposition \ref{octaylor}, $(E,\nabla)$ is overconvergent 
if and only if the following condition is satisfied: 
For each $\eta \in (0,1)\cap\Gamma^*$, 
there exists $\rho_0 \in (\eta,1)\cap\Gamma^*$ such that for any 
$\rho \in [\rho_0,1]\cap\Gamma^*$, we have 
$$ \forall \alpha, \,\, 
\left\| \dfrac{1}{j!}\pa^j(e_{\alpha}) \right\| \eta^{|j|} 
 \to 0 \,\,\,\, (j \in \N^{n+m}, |j| \to \infty),$$ 
where $\| \cdot \|$ denotes 
any $p$-adic Banach norm on $\Gamma(\fU_{0,\rho},E)$ induced by the 
affinoid norm on $\Gamma(\fU_{0,\rho},\cO)$. \par 
First, let us note that the supremum norm on 
$\Gamma(\fU_{0,\rho},\cO)$ gives the same topology as the affinoid norm 
on it by \cite[6.2.4 Theorem 1]{bgr}. Hence we can replace the conclusion 
of the above condition by  
$$\forall \alpha, \,\, 
\left| \dfrac{1}{j!}\pa^j(e_{\alpha}) \right| 
\eta^{|j|} \to 0 \,\,\,\, (j \in \N^{n+m}, |j| \to \infty),$$ 
where $| \cdot |$ 
denotes any $p$-adic Banach norm on $\Gamma(\fU_{0,\rho},E)$ induced by the 
supremum norm on $\Gamma(\fU_{0,\rho},\cO)$. \par 
Next let us note that, if $\rho > \eta^{1/2}$, we have 
\begin{align}
\left| \dfrac{1}{j!}\pa^j 
(\sum_{\alpha} f_{\alpha}e_{\alpha}) \right| \eta^{|j|} 
\leq &
\max_{\alpha,0 \leq j' \leq j} 
\left(
\left| \dfrac{1}{(j-j')!}\pa^{j-j'}
(f_{\alpha}) \right| \cdot 
\left| \dfrac{1}{j'!}\pa^{j'} 
(e_{\alpha}) \right| \eta^{|j|} \right) \label{ineq} \\ 
\leq &
\max_{\alpha,0 \leq j' \leq j} 
\left(|f_{\alpha}| 
\left| \dfrac{1}{j'!}\pa^{j'} 
(e_{\alpha}) \right| \eta^{(|j|+|j'|)/2} 
\right) \nonumber 
\end{align}
for any $f_{\alpha}$'s in $\Gamma(\fU_{0,\rho},\cO)$. Using this, we 
see that we can replace the conclusion 
of the above condition by 
\begin{equation}\label{3}
\left| \dfrac{1}{j!}\pa^j \right| 
\eta^{|j|} \to 0 \,\,\,\, (j \in \N^{n+m}, |j| \to \infty),
\end{equation}
where $| \cdot |$ denotes the operator norm. \par 
Next, note that \eqref{3} implies that 
\begin{equation}\label{4}
\forall i, \,\,\, \left| \dfrac{1}{j!}\pa_i^j \right| 
\eta^{j} \to 0 \,\,\,\, (j \in \N, |j| \to \infty).
\end{equation}
On the other hand, if we assume \eqref{4}, we have 
$\left| \dfrac{1}{j!}\pa_i^j \right| \eta^{j} \leq C$ for some 
constant $C$ independent of $i,j$. Then, 
for any $j =(j_1,...,j_{n+m}) \in \N^{n+m}$, we have an index $i$ with 
$j_i \geq |j|/(n+m)$ and the inequality 
$$ 
\left| \dfrac{1}{j!}\pa^j \right| \eta^{|j|} \leq 
\prod_{l=1}^{n+m} 
\left| \dfrac{1}{j_l!}\pa_l^{j_l} \right| \eta^{j_l} \leq 
C^{n+m-1}
\left| \dfrac{1}{j_i!}\pa_l^{j_i} \right| \eta^{j_i}.
$$ 
This inequality shows that \eqref{4} implies \eqref{3}. Hence 
they are equivalent and so we can replace the conclusion 
of the overconvergence condition by \eqref{4}. \par 
Then, by using the inequality \eqref{ineq} again, we see that we can replace
 the conclusion of the overconvergence condition by 
\begin{equation}
\forall \alpha, \forall i, \,\,\, \left| \dfrac{1}{j!}\pa_i^j(e_{\alpha}) 
\right| 
\eta^{j} \to 0 \,\,\,\, (j \in \N, |j| \to \infty).
\end{equation}
Next, by \cite[3.1.8]{kedlayaI}, we have 
$|x| = \max_{\rrho \in \{\rho,1\}^n \times \{1\}^m} |x|_{\rrho}$ for 
any $x \in \Gamma(\fU_{0,\rho},\cO)$, where $|\cdot|_{\rrho}$ denotes 
the $\rrho$-Gauss norm. Using this, we see that we can replace 
 the conclusion of the overconvergence condition by 
\begin{equation}
\forall \alpha, \forall i, 
\forall \rrho \in \{\rho,1\}^n \times \{1\}^m, 
\,\,\, \left| \dfrac{1}{j!}\pa_i^j(e_{\alpha}) 
\right|_{\rrho,E} 
\eta^{j} \to 0 \,\,\,\, (j \in \N, |j| \to \infty) 
\end{equation}
(where $|\cdot|_{\rrho,E}$ denotes a norm on $E_{\rrho}$ induced by 
the $\rrho$-Gauss norm), and by using the analogue of the 
inequality \eqref{ineq}, we see that it is equivalent to the condition 
\begin{equation}
\forall i, 
\forall \rrho \in \{\rho,1\}^n \times \{1\}^m, 
\,\,\, \left| \dfrac{1}{j!}\pa_i^j
\right|_{\rrho} 
\eta^{j} \to 0 \,\,\,\, (j \in \N, |j| \to \infty) 
\end{equation}
(where $|\cdot|_{\rrho,E}$ denotes the operator norm induced by 
the previous  $|\cdot|_{\rrho,E}$). \par 
Then we see easily (by elementary calculus) that 
the overconvergence of $(E,\nabla)$ is equivalent to the 
following assertion: 
For each $\eta \in (0,1)\cap\Gamma^*$, 
there exists $\rho_0 \in (\eta,1)\cap\Gamma^*$ such that for any 
$\rho \in [\rho_0,1]\cap\Gamma^*$ and 
$\rrho = (\rho_i)_i \in \{\rho,1\}^n \times \{1\}^m$, we have 
$$ \min_i\{p^{-1/(p-1)}|\pa_i|_{\rrho,E,\sp}^{-1}\} > \eta. $$
Moreover, it is easy to see that we can replace the above inequality by 
$$ IR(E,\rrho) = 
\min_i\{p^{-1/(p-1)}\rho_i^{-1}|\pa_i|_{\rrho,E,\sp}^{-1}\} > \eta. $$
Then, using the contunuity of 
the function $\rrho \mapsto IR(E,\rrho)$ 
(Proposition \ref{kedlayaxiaoprop}), we can conclude that the overconvergence 
condition for $(E,\nabla)$ 
is equivalent to the condition $IR(E,\1)=1$. 
\end{proof}

Before the proof of the main theorem, finally we recall a technical 
lemma, which was proved in \cite[2.8]{cc}. 

\begin{lem}\label{tech}
Let $\cX$ be a smooth $p$-adic formal scheme over $\Spf O_K$, 
let $I \subseteq (0,1)$ be a closed aligned interval of positive length 
and let $a = \sum_{n \in \Z} a_nt^n$ be a non-zero element of 
$\Gamma(\cX_K \times A^1_K(I),\cO)$. Then there exist 
an open dense sub affine formal scheme $\cU \subseteq \cX$ and 
a closed aligned subinterval $I' \subseteq I$ of positive length 
satisfying the following conditions$:$ 
\begin{enumerate}
\item 
$a \in \Gamma(\cU_K \times A^1_K(I'),\cO^{\times})$. 
\item 
For any $u \in \cU_K$ and $\rho \in I'$, we have 
$|a(u)|_{\rho} = |a|_{\rho}$, where 
$a(u) := \sum_{n\in \Z} a_n(u)t^n \allowbreak 
\in \Gamma(u \times A^1_K(I'),\cO)$ and 
$|\cdot|_{\rho}$ denotes the $\rho$-Gauss norm. 
\end{enumerate}
\end{lem}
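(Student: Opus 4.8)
The plan is to first reduce to the case where $\cX = \Spf R$ is affine with $R$ flat over $O_K$ and $R/\pi R$ an integral domain. This is harmless: $\cX$ is smooth over $\Spf O_K$, so the special fiber $X$ is reduced, and one passes to a dense affine open and then to connected components — the only point needing care being that, if $\cX$ has several components, one must at the end choose a single subinterval $I'$ serving all of them, which is possible because the finitely many ``critical radii'' introduced below are finite in total. Write $a = \sum_n a_n t^n$ with $a_n \in B := \Gamma(\cX_K,\cO)$. The technical fact I will lean on is this: for a nonzero $f \in B$, writing $f = \pi^j \tilde f$ with $j \in \Z$ and $\tilde f \in R \setminus \pi R$ (possible since $B = R[1/\pi]$ and $\bigcap_m \pi^m R = 0$), the spectral norm on $\cX_K$ satisfies $\|f\| = |\pi|^j$, the reduction $\bar{\tilde f} \in R/\pi R$ is nonzero, and for $u \in \cX_K$ with specialization $\bar u \in X$ one has $|f(u)| = \|f\|$ if and only if $\bar u$ lies in the distinguished open $D(\bar{\tilde f})$. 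This rests on $R/\pi R$ being reduced and on the surjectivity of the specialization map, and is where smoothness of $\cX$ is used.

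I would then study the Gauss norm $\rho \mapsto |a|_\rho = \max_n \|a_n\|\rho^n$ on $I$. It is continuous and strictly positive on the compact interval $I$, hence bounded below by some $c > 0$; since moreover $\|a_n\|\rho^n \to 0$ as $n \to \pm\infty$ uniformly on $I$, only finitely many indices $n$ — call the set $S$ — ever attain the maximum, so $|a|_\rho = \max_{n \in S}\|a_n\|\rho^n$ on $I$. Being a maximum of finitely many monomials in $\rho$, this function is piecewise log-affine with finitely many breakpoints, which I call the critical radii of $a$ on $I$; off a critical radius a single index of $S$ is the strict maximizer, and at a critical radius at least two indices of $S$ tie.

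Now for the construction and verification. Since $X$ is irreducible, $U := X \setminus \bigcup_{n \in S} V(\bar{\tilde a}_n)$ is a dense open; I pick a nonempty distinguished open $D(\bar g) \subseteq U$ and let $\cU \hra \cX$ be the affine open formal subscheme with special fiber $D(\bar g)$ (it is dense). By the technical fact above, for every $n \in S$ and every $u \in \cU_K$ one has $|a_n(u)| = \|a_n\|$, since the specialization $\bar u$ lies in $D(\bar g) \subseteq D(\bar{\tilde a}_n)$; in particular the supremum norm of $a_n$ over $\cU_K$ equals that over $\cX_K$, so $|a|_\rho$ is unambiguous for $a$ viewed on $\cU_K \times A^1_K(I')$. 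Next I choose $I' \subseteq I$ to be a closed aligned subinterval of positive length containing no critical radius — possible because the critical radii are finite in number and $\Gamma^* = \sqrt{|K^\times|} \cup \{0\}$ is dense in $[0,\infty)$. For condition (2): given $u \in \cU_K$ and $\rho \in I'$, one has $|a_n(u)|\rho^n = \|a_n\|\rho^n$ for $n \in S$ and $|a_n(u)|\rho^n \leq \|a_n\|\rho^n < |a|_\rho$ for $n \notin S$, hence $|a(u)|_\rho = \max_n |a_n(u)|\rho^n = \max_{n \in S}\|a_n\|\rho^n = |a|_\rho$. For condition (1): $\cU_K \times A^1_K(I')$ is a reduced affinoid, so $a$ is a unit there if and only if it has no zeros; a zero of $a$ above a point $u \in \cU_K$ at some radius $\rho \in I'$ would force $\max_n |a_n(u)|\rho^n = \max_{n \in S}\|a_n\|\rho^n$ to be attained by at least two indices, i.e. $\rho$ to be a critical radius, which $I'$ avoids — so $a \in \Gamma(\cU_K \times A^1_K(I'),\cO^\times)$.

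The hard part is the bookkeeping in the first paragraph: showing that the locus $\{|a_n| < \|a_n\|\}$ on $\cX_K$ is the preimage under specialization of a proper closed subscheme of $X$, and arranging a single dense affine $\cU$ and a single aligned $I'$ that work simultaneously for all of the finitely many coefficients indexed by $S$. This is where reducedness of the special fiber (hence smoothness of $\cX$) genuinely enters, and where one must be careful with the normalizations by powers of $\pi$ and with the existence of suitable aligned subintervals near any radius, which uses that $\Gamma^*$ is dense. The rest is elementary manipulation of Gauss norms and Newton polygons.
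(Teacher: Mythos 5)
Your proof is correct, and it reaches the lemma by a somewhat different and more systematic route than the paper. Both arguments rest on the same key mechanism --- reducedness of the special fibre guarantees a dense affine open on which a given nonzero coefficient $a_n$ attains its supremum norm --- but you organize the rest around the Newton polygon of $a$ on $I$: identify the finite set $S$ of indices ever attaining $|a|_\rho=\max_n \|a_n\|\rho^n$, locate the finitely many critical radii where the dominant index changes, shrink $\cU$ so that every $a_n$ with $n\in S$ attains its sup norm, and take $I'$ to dodge all the critical radii at once. The paper instead fixes a single index $n_0$ (the largest element of $A=\{n\le 0:\|a_n\|\alpha^n=|a|\}$ when $A\neq\emptyset$, else the smallest of the analogous set $B$ at $\beta$), shrinks $\cU$ only to control that one coefficient, and builds $I'$ directly by a three-case estimate over $n\in A$, $n\in C\setminus A$, $n\notin C$. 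Your version makes the Newton-polygon structure explicit and is arguably easier to audit; the paper's is more elementary and does not need to name critical radii. Your technical fact about the formal model and supremum norm, the finiteness of $S$, and the verification of condition (1) via the geometric-series factorization $a=a_{n_1}t^{n_1}(1+\cdots)$ are all fine.

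One caveat about your opening paragraph: the proposed fix for the non-connected case --- handle connected components separately and then ``choose a single subinterval $I'$ serving all of them'' --- does not actually work. The norm $|a|_\rho$ is built from the supremum norms of the $a_n$ over all of $\cX_K$, hence is the maximum of the component-wise Gauss norms; if one component is strictly dominated by another, condition (2) fails for every $u$ on the dominated component and for every choice of $I'$. (Take $\cX=\Spf O_K\sqcup\Spf O_K$ and $a=(\pi,1)$: then $|a|_\rho=1$ but $|a(u)|_\rho=|\pi|$ for $u$ on the first component.) So the lemma as stated really requires $X$ irreducible. This is not a defect of your argument relative to the paper's: the paper's proof relies on the same hypothesis implicitly, since it needs the non-vanishing locus of $\ol{ba_{n_0}}$ on $X$ to be dense. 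It is harmless in context because every application in the paper (to $\cX=\fG_{m,O_K}^{n+m-1}$ and the $\cV_r$) has irreducible special fibre, but the sentence in your reduction should be dropped or replaced by the blanket assumption that $X$ is irreducible.
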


\begin{proof}
For the convenience of the reader, we reproduce the proof here. 
In this proof, $|\cdot|$ denotes the supremum norm. Let us write 
$I = [\alpha,\beta]$. By \cite[3.1.7, 3.1.8]{kedlayaI}, we have 
$$|a| = \max ( \sup_n(|a_n|\alpha^n), \sup_n(|a_n|\beta^n)) = 
\max ( \sup_{n\leq 0}(|a_n|\alpha^n), \sup_{n\geq 0}(|a_n|\beta^n)).$$ 
Let us define finite subsets 
$A \subseteq \Z_{\leq 0}$, $B \subseteq \Z_{\geq 0}$ by 
$A := \{n \leq 0\,\vert\, |a_n|\alpha^n = |a|\}, 
B := \{n \geq 0\,\vert\, |a_n|\beta^n = |a|\}.$ Then we have 
$A \cup B \not= \emptyset$. \par 
Let us first consider the case $A \not= \emptyset$. Let $n_0$ be 
the maximal element of $A$. Then, since $a_{n_0} \not= 0$, there 
exists an element $b \in K^{\times}$ such that 
$ba_{n_0} \in \Gamma(\cX,\cO_{\cX})$ and that the image $\ol{ba_{n_0}}$ of 
$ba_{n_0}$ in $\Gamma(X,\cO_X)$ is non-zero. Let $\cU \subseteq \cX$ be the 
open dense affine sub formal scheme such that $\ol{ba_{n_0}}$ is invertible on 
$\cU \times_{\cX} X$. Then we have $ba_{n_0} \in 
\Gamma(\cU,\cO_{\cU}^{\times})$. So, for all $u \in\cU_K$, we have 
$|a_{n_0}(u)| = |b^{-1}|$ and hence $|a_{n_0}(u)| = |a_{n_0}|$. 
(Here note that, for elements in $\Gamma(\cX_K, \cO)$, 
its supremum norm on $\cX_K$ is the same as that on $\cU_K$.) 
Next we prove the following claim: \\
\quad \\
{\bf claim.} \,\, There exists a closed aligned subinterval 
$I' \subseteq I$ of positive length 
such that $|a_n|\rho^n < |a_{n_0}|\rho^{n_0}$ 
for any $n \in \Z, \not= n_0$ and $\rho \in I'$. \\
\quad \\
Let us put $C := \{n \in \Z \,\vert\, \max(|a_n|\alpha^n, |a_n|\beta^n) 
\geq |a_{n_0}|\beta^{n_0}\}$. Then $C$ is a finite set containing $A$. 
If $n \in A, \not=n_0$, we have $|a_n|\alpha^n = |a_{n_0}|\alpha^{n_0}$ 
and $n < n_0 \leq 0$. Hence we have 
$|a_n|\rho^n < |a_{n_0}|\rho^{n_0}$ for any $\rho \in (\alpha,\beta]$. 
For $n \in C - A$, we have $|a_n|\alpha^n < |a_{n_0}|\alpha^{n_0}$. 
So there exists $\beta' \in (\alpha, \beta]$ such that, for any $n 
\in C-A$ and for any 
$\rho \in [\alpha,\beta']$, we have $|a_n|\rho^n < |a_{n_0}|\rho^{n_0}$. 
For $n \not\in C$, we have, for any $\rho \in I$, the inequalities 
$$ 
|a_n|\rho^n \leq \max(|a_n|\alpha^n,|a_n|\beta^n) < 
|a_{n_0}|\beta^{n_0} \leq |a_{n_0}|\rho^{n_0}. $$
Summing up these, we see the claim. \par 
Let us put $f := \sum_{n \not= n_0}(a_n/a_{n_0})t^{n-n_0} \in 
\Gamma(\cU_K \times A^1_K(I'),\cO)$ and 
take any $u\in \cU_K$, 
$\rho \in I'$. Then we have 
$$ |f(u)|_{\rho} \leq 
\dfrac{\sup_{n\not=n_0}(|a_n|\rho^n)}{|a_{n_0}(u)|\rho^{n_0}} = 
\dfrac{\sup_{n\not=n_0}(|a_n|\rho^n)}{|a_{n_0}|\rho^{n_0}} <1. $$
So we have $|f| <1$. So we have 
$a = a_{n_0}t^{n_0}(1+f) \in \Gamma(\cU_K \times A^1_K(I'),\cO^{\times}).$ 
Moreover, for any $u\in \cU_K$ and $\rho \in I'$, 
we have 
$$ 
|a(u)|_{\rho} = \sup_{n}(|a_n(u)|\rho^n) = |a_{n_0}(u)|\rho^{n_0} 
= |a_{n_0}|\rho^{n_0}
= \sup_{n}(|a_n|\rho^n) = |a|_{\rho}. $$
We can prove the lemma in the case $B\not= \emptyset$ in the same way. 
(In this case, we define $n_0$ to be the minimal element of $B$.) 
So we are done. 
\end{proof}

Now we are ready to prove Theorem \ref{mainthm}. 
The proof is similar to that of \cite[2.5, 2.9]{cc}. 

\begin{proof}[Proof of Theorem \ref{mainthm}]
As we explanined in the introduction, it suffices to prove the implication 
(2)$\,\Longrightarrow\,$(1) and by Proposition \ref{firstred}, it 
suffices to prove it for a formal smooth pair of the form 
$(\fG_{m,O_K}^n \times \fAf_{O_K}^m, \fAf_{O_K}^{n+m}) =: 
(\cX_0,\ol{\cX}_0)$. (Then we have 
$\cX_{0,K} = A^n_K[1,1] \times A^m_K[0,1], \ol{\cX}_{0,K} = 
A^{n+m}_K[0,1]$.) For $\lam \in (0,1]\cap\Gamma^*$, let 
$\fU_{0,\lam} = A^n_K[\lam,1] \times A^m_K[0,1]$ be as before. \par 
Let $(E,\nabla)$ be a $\nabla$-module on a strict neighborhood of 
$\cX_{0,K}$ in $\ol{\cX}_{0,K}$ satisfying the condition (2). Then 
it is defined on $\fU_{0,\lam}$ for some $\lam$. Let us 
assume that it is not overconvergent. Then, by Proposition \ref{oc-ir}, 
we have $IR(E,\1) <1$. So we have $p^{-1/(p-1)}|\pa_i|^{-1}_{\1,E,\sp} 
< 1$ for some $i \,(1 \leq i \leq n+m)$. 
Fix such index $i$. \par 
In the following, for $\rho \in [0,1]$, we denote 
$(1,...,1,\os{i}{\check{\rho}},1,...,1) \in [0,1]^{n+m}$ simply by 
$\rrho$. Let us denote the restriction of $E$ to 
\begin{equation*}
\begin{aligned}
(\fG^{n+m-1}_{m,O_K})_K \times A^1_K[\lam,1] & = 
A_K^{n+m-1}[1,1] \times A^1_K[\lam,1] \\ 
& \cong A_K^{i-1}[1,1] \times A^1_K[\lam,1] 
\times A_K^{n+m-i}[1,1] \,\,\,\, (\text{if $1 \leq i \leq n$}), \\
(\fG^{n+m-1}_{m,O_K})_K \times A^1_K[0,1] & = 
A_K^{n+m-1}[1,1] \times A^1_K[0,1] \\ 
& \cong A_K^{i-1}[1,1] \times A^1_K[0,1] 
\times A_K^{n+m-i}[1,1] \,\,\,\, (\text{if $n+1 \leq i \leq n+m$})
\end{aligned}
\end{equation*}
(where the second isomorphism is the permutation of the $i$-th factor and 
the last factor) also by $E$. \par 
Let us take a closed aligned intervals $I_r = [\alpha_r,\beta_r] 
\subseteq [\lam,1) \, (r \in \N)$ with $\alpha_r < \beta_r < 
\alpha_{r+1}\,(\forall r)$, $\lim_{r\to\infty}\alpha_r=1$ 
and put $A_r := \Gamma((\fG^{n+m-1}_{m,O_K})_K \times A_K(I),\cO)$, $\E_r := 
\Gamma((\fG^{n+m-1}_{m,O_K})_K \times A_K(I_r),E)$. 
Then $A_r$ is an integral domain and 
$\E_r$ is a finitely generated $A$-module. Let 
$\e_r := (\e_{r,1},...,\e_{r,\mu})$ be a basis of $\Frac A_r \otimes_{A_r} 
\E_r$ as $\Frac A_r$-vector space ($\mu = \rk E$) 
and let $(\f_{r,1}, ..., \f_{r,\nu_r})$ 
be a set 
of generator of $\E_r$ as $A_r$-module. Then there exist 
$b_{r,ij} := b'_{r,ij}/b''_{r,ij}, c_{r,ji} := c'_{r,ji}/c''_{r,ji} 
\in \Frac A_r 
\,(1 \leq i \leq \mu, 1 \leq j \leq \nu_r)$ such that 
$\e_{r,i} = \sum_{j=1}^{\nu_r} b_{r,ij} \f_{r,j} \,(\forall i), 
\f_{r,j} = \sum_{i=1}^{\mu} c_{r,ji} \e_{r,i} \,(\forall j)$. By 
Lemma \ref{tech}, there exists an open dense sub affine formal scheme 
$\cV_r \subseteq (\fG^{n+m-1}_{m,O_K})_K$ 
and closed aligned subinterval $I'_r \subseteq I_r$ 
of positive length 
such that $b''_{r,ij}, c''_{r,ji} \in \Gamma(\cV_K \times A^1_K(I'),
\cO^{\times})$. Then we see that $\e_r$ forms a basis of 
$\Gamma(\cV_{r,K} \times A^1_K(I'_r),E)$ as 
$\Gamma(\cV_{r,K} \times A^1_K(I'_r),\cO)$-module. 
For $s \in \N$, let $G_{r,s} \in 
\Mat_{\mu}(\Gamma(\cV_{r,K} \times A^1_K(I'_r),\cO))$ be 
the matrix expression of $\pa^s$ with respect to the basis $\e_r$ and 
let us put $V_r := \cV_r \times_{\cZ} Z$. 
Then, again by Lemma \ref{tech}, there exists a decreasing 
sequence $\{V_{r,s}\}_{s\in\N}$ of dense open subschemes in $V_r$ and 
a decreasing sequence $\{I'_{r,s}\}_{s\in \N}$ 
of closed aligned subintervals of $I'_r$ of positive length 
such that, for any 
separable closed point $x$ in $V_{r,s}$, any lift $\ti{x}$ of $x$ in 
$\fG^{n+m-1}_{m,O_K}$, any $s' \leq s$ and any $\rho_r \in I'_{r,s}$, 
we have the equality $|G_{r,s'}(\ti{x}_K)|_{\rho_r}=|G_{r,s'}|_{\rrho_r}$, 
where $G_{r,s'}(\ti{x}_K) \in \Mat_{\mu}(\Gamma(\ti{x}_{K} 
\times A^1_K(I'_r),\cO))$ is the pull-back of $G_{r,s'}$ by 
$\ti{x}_K \times A^1_K(I'_r) \hra \cV_{r,K} \times A^1_K(I'_r)$. 
Let $\pa_i(\ti{x}_K)$ be the action of $\pa_i$ on the pull-back 
$E(\ti{x}_K)$ of $E$ to $\ti{x}_K \times A^1_K[\lam,1]$ (if $1 \leq i \leq 
n$), $\ti{x}_K \times A^1_K[0,1]$ (if $n+1 \leq i \leq n+m$). 
Then, for any 
separable closed point $x$ in $\bigcap_s V_{r,s}$, 
any lift $\ti{x}$ of $x$ in $\fG^{n+m-1}_{m,O_K}$ and 
any $\rho_r \in \bigcap_{s\in\N}I'_{r,s}$, we have 
\begin{align}
p^{-1/(p-1)}\rho_r^{-1}|\pa_i(\ti{x}_K)|^{-1}_{\rho_r,E(\ti{x}_K),\sp} 
& = 
\min\{1,p^{-1/(p-1)}\rho_r^{-1}\varliminf_{s\to\infty}
|G_{r,s}(\ti{x}_K)|_{\rho_r}^{-1/s}\} \label{box1} \\ 
& = 
\min\{1,p^{-1/(p-1)}\rho_r^{-1}\varliminf_{s\to\infty}
|G_{r,s}|_{\rrho_r}^{-1/s}\} \nonumber \\ 
& = p^{-1/(p-1)}\rho_r^{-1}|\pa_i|^{-1}_{\rrho_r,E,\sp}. 
\nonumber 
\end{align}
Now let us take a separable point $x$ in 
$\bigcap_{r,s}V_{r,s}$ (which is possible since $k$ is uncountable) and 
its lift $\ti{x}$ in $\fG^{n+m-1}_{m,O_K}$. Then we have the equality 
\eqref{box1} for any $r \in \N$. Since the functions 
$$ \rho \mapsto p^{-1/(p-1)}\rho^{-1}|
\pa_i(\ti{x}_K)|^{-1}_{\rho,E(\ti{x}_K),\sp}, \,\,\, 
\rho \mapsto 
p^{-1/(p-1)}\rho^{-1}|\pa_i|^{-1}_{\rrho_r,E,\sp}. 
$$ 
are continuous by Proposition \ref{kedlayaxiaoprop} and 
Corollary \ref{kedlayaxiaocor}, this implies 
\begin{equation}\label{box2}
IR(E(\ti{x}_K),1) = p^{-1/(p-1)}|\pa_i|^{-1}_{\1,E,\sp} < 1.
\end{equation}
Now let us define the strict closed immersion 
$i:(\cY,\ol{\cY}) \hra (\cX_0,\ol{\cX}_0)$ of formal smooth pairs by 
$\ol{\cY} := \ti{x} \times \fAf^1_{O_K}$, 
\begin{align*}
i:\ol{\cY} & = \ti{x} \times \fAf^1_{O_K} \hra 
\fG^{n+m-1}_{O_K} \times \fAf^1_{O_K} \\ & \cong 
\fG^{i-1}_{O_K} \times \fAf^1_{O_K} \times \fG^{n+m-i}_{O_K} 
\hra \fAf^{n+m}_{O_K} = \ol{\cX}_0
\end{align*}
(where the isomorphism in the second line is the permutation of the $i$-th 
factor and the last factor) and 
\begin{equation*}
\cY := \ol{\cY} \times_{\ol{\cX}_0} \cX_0 = \left\{
\begin{aligned}
& \ti{x} \times \fG_{m,O_K}, \,\,\, \text{if $1 \leq i \leq n$}, \\
& \ti{x} \times \fAf^1_{O_K}, \,\,\, \text{if $n+1 \leq i \leq n+m$}. 
\end{aligned} \right.
\end{equation*}
Then $E(\ti{x}_K)$ is nothing but the pull-back $i^*E$ of $E$ by $i$. 
Hence, by \eqref{box2} and Proposition \ref{oc-ir}, 
$i^*(E,\nabla)$ is not overconvergent and this contradicts the assumption 
on $(E,\nabla)$. Hence $(E,\nabla)$ is overconvergent and so the proof 
of the theorem is finished. 
\end{proof}

\begin{rem}\label{refined}
Let $(\cX,\ol{\cX})$ be a formal smooth pair admitting a strict 
finite etale morphism 
$f: (\cX,\ol{\cX}) \lra (\cX_0, \ol{\cX}_0) := 
(\fG^n_{m,O_K} \times \fAf^m_{O_K}, \fAf^{n+m}_{O_K})$. 
For a closed sub formal scheme $\ti{x} \hra \fG^{n+m-1}_{m,O_K}$ which is 
etale over $\Spf O_K$ and $1 \leq i \leq n+m$, let $\ol{\cY}_{\ti{x},i,0} 
\hra \ol{\cX}_0$ be the closed immersion defined by 
\begin{align*}
\ol{\cY}_{\ti{x},i,0} & := \ti{x} \times \fAf^1_{O_K} \hra 
\fG^{n+m-1}_{O_K} \times \fAf^1_{O_K} \\ & \cong 
\fG^{i-1}_{O_K} \times \fAf^1_{O_K} \times \fG^{n+m-i}_{O_K} 
\hra \fAf^{n+m}_{O_K} = \ol{\cX}_0
\end{align*}
(where the isomorphism in the second line is the permutation of the $i$-th 
factor and the last factor), let $\cY_{\ti{x},i,0} := \ol{\cY}_{\ti{x},i,0} 
\times_{\ol{\cX}_0} \cX_0$ and let 
$(\cY_{\ti{x},0}, \ol{\cY}_{\ti{x},0})$ be the pull-back of 
$(\cY_{\ti{x},0}, \ol{\cY}_{\ti{x},0})$ by $f$. Then, by the proofs of 
Proposition \ref{firstred} and Theorem \ref{mainthm} given above, we see 
the following refined version of Theorem \ref{mainthm} in this case: 
For $(E,\nabla) \in \MIC((\cX_K,\ol{\cX}_K)/K)$, the following are 
equivalent: 
\begin{enumerate}
\item $(E,\nabla)$ is overconvergent. 
\item For any $\ti{x}$ as above and for any $1 \leq i \leq n+m$, 
the restriction of $(E,\nabla)$ to $\MIC((\cY_{\ti{x},i,K}, 
\ol{\cY}_{\ti{x},i,K})/K)$ is overconvergent. 
\end{enumerate}
\end{rem}

\section{Proof of the second main theorem}

In this section, we give a proof of the second main theorem 
(Theorem \ref{mainthm2}) in this paper. To do so, first we prove a 
partial generalization of a result of Kedlaya \cite[Theorem 2]{kedlayamore} 
on etale covers of smooth $k$-varieties to the case of smooth formal schemes 
and smooth schemes over $O_K$. 
Throughout this section, let $\pi$ be a fixed uniformizer of $O_K$ and 
for a scheme 
$\X$ over $\Spec O_K$, we denote the generic fiber of it by $\X_K$, 
the special fiber of it by $\X_k$ and the $p$-adic completion of it by 
$\wh{\X}$. For a $p$-adic formal scheme $\cX$ over $\Spf O_K$, we denote 
the special fiber of it by $\cX_k$. \par 
Let $(\cX,\ol{\cX})$ be a formal smooth pair and let $x$ be a 
closed point of $\cX_k$. Let us put 
$\cD := \ol{\cX} - \cX = \bigcup_{i=1}^q \cD_i$ 
(where $\cD_i$'s are connected and smooth over $\Spf O_K$) and define 
$I_{\ni x}, I_{\not\ni x}, \cD_{\ni x}, \cD_{\not\ni x}$ in the 
following way: 
\begin{align*}
& I_{\ni x} := \{i \,\vert\, x \in \cD_i\}, \qquad 
I_{\not\ni x} :=  \{i \,\vert\, x \not\in \cD_i\}, \\
& \cD_{\ni x} := \bigcup_{i\in I_{\ni x}} \cD_i, \qquad 
 \cD_{\not\ni x} := \bigcup_{i\in I_{\not\ni x}} \cD_i. 
\end{align*}
We say that the formal smooth pair $(\cX,\ol{\cX})$ satisfies the condition 
$(*)_x$ if, for any $I \subseteq I_{\ni x}$, 
$\bigcap_{i \in I}\cD_i$ is irreducible. We will prove the following 
result, which is a formal scheme version of \cite[Theorem 2]{kedlayamore}: 

\begin{thm}\label{moreformal}
Let $(\cX,\ol{\cX})$ be a formal smooth pair with $\dim \ol{\cX} = n$ and 
$\ol{\cX}$ projective over $\Spf O_K$. Let $x$ be a closed point 
in $\ol{\cX}_k$ and assume that $(\cX,\ol{\cX})$ satisfies the condition 
$(*)_x$. Let us put $\cD := \ol{\cX} - \cX$, let us define 
$\cD_{\ni x}, \cD_{\not\ni x}$ as above and let $\cD_{\ni x} = 
\bigcup_{i=1}^r \cD_i$ be the decomposition of $\cD_{\ni x}$ into 
irreducible components. Then there exists a finite flat morphism 
$f: \ol{\cX} \lra \fP^n_{O_K}$ satisfying the following conditions$:$ 
\begin{enumerate}
\item 
$f$ is etale on $f^{-1}(\fAf^n_{O_K})$. 
\item 
We have $\cD_{\not\ni x} \subseteq f^{-1}(H_0)$ and 
$x \in f^{-1}(\fAf^n_{O_K})$, where $H_0 := \fP^n_{O_K} - \fAf^n_{O_K} 
\subseteq \fP^n_{O_K}$ is the hyperplane at infinity. 
\item 
For $1 \leq i \leq r$, we have $\cD_i \subseteq f^{-1}(H_i)$, where 
$H_i \subseteq \fP^n_{O_K}$ is the $i$-th coordinate hyperplane. 
$($Hence, by combining with $(1)$, we see that 
$\cD_i \cap f^{-1}(\fAf^n_{O_K}) \subseteq f^{-1}(H_i \cap \fAf^n_{O_K})$ 
is an open and closed immersion.$)$
\end{enumerate}
\end{thm}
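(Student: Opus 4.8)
The plan is to mimic the proof of Kedlaya's \cite[Theorem 2]{kedlayamore} in the classical (field) case, carrying it over to the $O_K$-relative setting, the key point being that $\fP^n_{O_K}$, $\fAf^n_{O_K}$ and the coordinate hyperplanes $H_i$ all make sense over $O_K$ and that the relevant genericity statements can be arranged to hold over a dense open of an affine space of linear projections (whose $O_K$-points, or points after a suitable base extension, are then non-empty because one only needs a closed point avoiding a proper closed subscheme of the parameter space). First I would reduce to the case $\ol{\cX}$ irreducible (equivalently connected and smooth over $\Spf O_K$) and choose a closed immersion $\ol{\cX} \hra \fP^N_{O_K}$; I then replace $\fP^N_{O_K}$ by the projective closure of the normal bundle picture so that all the classical incidence-variety dimension counts apply verbatim over $\Spec O_K$, noting that $\ol{\cX}_k$ being projective over $k$ and $(*)_x$ holding means the special-fiber statement is exactly \cite[Theorem 2]{kedlayamore}.

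The heart of the argument is the construction of the linear projection $\fP^N_{O_K} \dashrightarrow \fP^n_{O_K}$. I would proceed in stages: first arrange, by a generic choice of the center of projection (a linear subspace of dimension $N-n-1$ disjoint from $\ol{\cX}$), that the projection $\ol{\cX} \lra \fP^n_{O_K}$ is finite; finiteness is an open condition that holds for a dense set of centers, and since we only need one good center defined over $O_K$ — or over a finite unramified extension, after which we can descend using that $\fP^n$ and the $H_i$ descend — the uncountability of $k$ is not even needed here, only that $k$ is infinite, which follows since $k$ is uncountable. Next, to get condition (1), that $f$ is \'etale over $\fAf^n_{O_K}$, I would impose that the projection be unramified away from a chosen hyperplane, which reduces to a transversality/generic-smoothness statement for the projection restricted to $\ol{\cX}$; here one uses smoothness of $\ol{\cX}$ over $O_K$ and a Bertini-type argument applied fiberwise over $\Spec O_K$, combined with the fact that an $O_K$-morphism of smooth $O_K$-schemes that is \'etale on the special fiber and quasi-finite is \'etale (on the locus where fibers have the right dimension). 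Conditions (2) and (3) are handled by prescribing the images of the specific divisors $\cD_{\not\ni x}$ and $\cD_1,\dots,\cD_r$: one wants the chosen coordinate hyperplane $H_0$ to pull back to something containing $\cD_{\not\ni x}$ and not $x$, and $H_i$ to contain $\cD_i$; since each $\cD_i$ and $\cD_{\not\ni x}$ is a divisor, the condition ``the image of $\cD_i$ is contained in $H_i$'' pins down finitely many linear conditions on the projection, and the hypothesis $(*)_x$ (irreducibility of the $\bigcap_{i\in I}\cD_i$) guarantees, as in Kedlaya's proof, that these conditions are compatible and that a general projection among those satisfying them still satisfies (1).

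I would organize the genericity bookkeeping as a single large open dense subscheme of the parameter space of admissible projections (those respecting the prescribed hyperplane-divisor incidences), defined as the locus where: the projection is finite, it is \'etale over $\fAf^n_{O_K}$ on the preimage of $\fAf^n$, and $x$ maps into $\fAf^n_{O_K}$. The main obstacle I anticipate is showing this open subscheme is non-empty and has a point over $O_K$ (or over an unramified extension, followed by descent): non-emptiness over the generic fiber $\Spec K$ follows from the classical theory over the field $K$ applied to $\ol{\cX}_K$, but one must also ensure non-emptiness of the corresponding open on the special fiber — that is where \cite[Theorem 2]{kedlayamore} over $k$ enters — and then one must check that the two opens, on generic and special fibers, are ``compatible'' in the sense that a single $O_K$-point hits both, which requires the parameter space to be smooth (or at least flat with geometrically dense open fibers) over $O_K$ and an application of the fact that a smooth $O_K$-scheme with non-empty special fiber has an $O_K$-point after unramified base change. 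Finally I would verify flatness of $f$ from finiteness together with $\ol{\cX}$ being Cohen--Macaulay and $\fP^n_{O_K}$ regular (miracle flatness), and check that $f^{-1}(H_i)$ contains $\cD_i$ by construction and that the open-and-closed-immersion parenthetical in (3) follows from (1) exactly as in the excerpt's use of it in the proof of Proposition \ref{firstred}.
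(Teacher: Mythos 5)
Your proposal has a genuine gap at the very point that drives Kedlaya's original argument. You propose to obtain condition (1) — étaleness over $\fAf^n_{O_K}$ — by a ``Bertini-type argument applied fiberwise'' together with generic choice of the linear center of projection. This fails on the special fiber: $\ol{\cX}_k$ is a smooth projective variety over $k$, a field of characteristic $p$, and in characteristic $p$ a generic linear projection to $\P^n$ is \emph{not} generically étale (the ramification can be wild, and the set of centers giving a separable projection over a prescribed affine chart can even be empty among linear ones). This is precisely the obstruction that \cite[Theorem 2]{kedlayamore} was designed to overcome. The paper's proof copes with it by abandoning linear projections entirely and instead constructing sections $u_0 = t^{pl}$ and $u_i = s_i s^{m(pq-1)} t^{p(l-1)} + t_i^p$ of a large tensor power of an ample line bundle; the key computation is that $d(u_i/u_0) \equiv (s^{qm}/t)^p\, d(s_i/s^m) \pmod p$, so the $p$-th powers kill the ``extra'' differentials and force unramifiedness on the affine chart to be inherited from the auxiliary map $g$ built by Lemma~\ref{lemma6}. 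No such cancellation is available for a generic linear projection, so your parameter-space open subscheme of projections that are ``étale over $\fAf^n$'' may well be empty on the special fiber, regardless of how large $k$ is.

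There are also two secondary points worth flagging. First, the paper does not prove flatness by miracle flatness alone: it checks flatness of $f\bmod\pi$ via dimension counting and then proves a separate lifting claim (finite morphism of flat $O_K/\pi^m$-schemes that is flat mod $\pi$ is flat) to promote this to flatness over $O_K$; your one-line appeal to Cohen--Macaulayness of $\ol{\cX}$ and regularity of $\fP^n_{O_K}$ does not directly apply in the mixed-characteristic formal setting without this intermediate step. Second, the paper's proof is entirely via sections of an ample line bundle on $\ol{\cX}$ (Lemmas~\ref{lemma3}--\ref{lemma6} are formal-scheme analogues of Kedlaya's Lemmas 3--6), not via an embedding $\ol{\cX} \hra \fP^N_{O_K}$ and linear projections; the bookkeeping of conditions (2) and (3) is done by choosing sections that vanish on prescribed divisors (Lemma~\ref{lemma5}), not by imposing linear incidence conditions on a projection center, and the hypothesis $(*)_x$ enters to guarantee the codimension bounds needed in Lemma~\ref{lemma5} at the step where sections $t_1,\dots,t_n$ vanishing on the $\cD_i$ but with no common zero on $\cZ$ are constructed. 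You would need to redo your whole plan around the $p$-th-power construction to close the gap.
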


We prove Theorem \ref{moreformal} by mimicking the proof of 
\cite[Theorem 2]{kedlayamore}. First we prove several premilinary lemmas. 

\begin{lem}[{cf. \cite[Lemma 3]{kedlayamore}}]\label{lemma3}
Let $\cX$ be a $p$-adic formal scheme projective over $\Spf O_K$ and 
let $\cL$ be an ample line bundle on $\cX$. Let $\alpha_0, ..., \alpha_n$ 
be sections of $\cL$ such that the intersection of zero loci of $\alpha_i$ 
$(0 \leq i \leq n)$ is empty. Then $\alpha_i$'s induce a finite morphism 
$\alpha: \cX \lra \fP^n_{O_K}$. 
\end{lem}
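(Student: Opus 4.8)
The statement is the classical fact that a collection of sections of an ample line bundle with empty common zero locus defines a finite morphism to projective space, now in the setting of $p$-adic formal schemes. The plan is to reduce to the corresponding statement for the schemes $\cX_m := \cX \times_{\Spf O_K} \Spec(O_K/\pi^{m+1})$ and then invoke properness plus quasi-finiteness. First I would observe that the sections $\alpha_0, \ldots, \alpha_n$, having empty common vanishing locus, globally generate $\cL$; hence by the universal property of $\fP^n$ they define a morphism $\alpha: \cX \lra \fP^n_{O_K}$ of formal schemes with $\alpha^*\cO(1) \cong \cL$. This part is formal.

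The next step is to show $\alpha$ is finite. I would work modulo $\pi^{m+1}$ for each $m$: the reduction $\alpha_m: \cX_m \lra \fP^n_{O_K/\pi^{m+1}}$ is a morphism of schemes, proper because $\cX_m$ is proper over $O_K/\pi^{m+1}$ (it is projective) and $\fP^n$ is separated, so $\cX_m \to \fP^n$ is proper. It remains to check $\alpha_m$ is quasi-finite, equivalently (being proper) that it is finite. Quasi-finiteness can be checked on the underlying topological space, which is the same as that of $\cX_0$ (the true special fiber); so it suffices to treat $\alpha_0: \cX_0 \lra \fP^n_k$. Here I would argue exactly as in the scheme case: if $\alpha_0$ contracted a curve (a one-dimensional irreducible subset $C$) to a point, then $\cL|_C = \alpha_0^*\cO(1)|_C$ would be trivial, contradicting ampleness of $\cL|_C$ (the restriction of an ample bundle to a closed subscheme is ample, and an ample bundle on a positive-dimensional proper scheme has positive degree on some curve). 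So $\alpha_0$ has finite fibers, hence $\alpha_m$ does for all $m$, hence each $\alpha_m$ is proper and quasi-finite, therefore finite.

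Finally I would pass to the limit: a morphism of $p$-adic formal schemes is finite if and only if each of its reductions modulo $\pi^{m+1}$ is finite (finiteness of $\alpha$ means $\alpha_*\cO_\cX$ is a coherent $\cO_{\fP^n}$-algebra, and coherence/finiteness over a Noetherian formal scheme is detected levelwise since $\cO_\cX = \varprojlim \cO_{\cX_m}$ and the transition maps are surjective with kernels killed by powers of $\pi$). Hence $\alpha$ is finite.

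**Main obstacle.** The genuinely substantive point is the quasi-finiteness of $\alpha_0$ — i.e. that ampleness of $\cL$ prevents contraction of curves — and making sure the reduction from the formal to the algebraic setting is clean; everything else (global generation, properness, the limit argument) is routine bookkeeping. One should be slightly careful that $\cX$ is allowed to be reducible or non-reduced, but this causes no trouble: the curve-contraction argument only needs the existence of a positive-dimensional proper closed subscheme with an ample line bundle, and restricting to an irreducible component with reduced structure reduces to the classical case.
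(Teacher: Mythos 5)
Your proposal is correct and follows the same approach as the paper: both reduce to the observation that $\alpha$ is proper, so it suffices to check that $\alpha$ modulo $\pi$ is quasi-finite. The only difference is that the paper simply cites \cite[Lemma 3]{kedlayamore} for the quasi-finiteness of the special fiber, whereas you reprove it via the curve-contraction/ampleness argument (which is in fact the content of that cited lemma).
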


\begin{proof}
Since $\alpha$ is proper, it suffices to check that $\alpha$ modulo 
$\pi O_K$ is quasi-finite and it is proved in 
\cite[Lemma 3]{kedlayamore}. 
\end{proof} 

\begin{lem}[{cf. \cite[Lemma 4]{kedlayamore}}]\label{lemma4}
Let $\cX$ be a $p$-adic formal scheme projective and flat over $\Spf O_K$ and 
let $\cL$ be an ample line bundle on $\cX$. Let $\cD \subseteq \cX$ be a closed
 sub formal scheme and let $\cZ \subseteq \cX$ be a $0$-dimensional 
closed sub formal scheme such that $\cD \cap \cZ$ is empty. 
Then, for $l$ sufficiently divisible, there exists a section of 
$\cL^{\times l}$ 
which vanishes on $\cD$ but not on any point in $\cZ_k$. 
\end{lem}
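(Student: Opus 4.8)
The plan is to mimic the classical argument (Kedlaya, \cite[Lemma 4]{kedlayamore}): reduce the statement to the special fiber $\cX_k$ over $k$, produce a section there avoiding $\cZ_k$, and then lift it to the formal scheme using $p$-adic approximation and the completeness of $O_K$. First I would reduce to the case where $\cD \cap \cZ = \emptyset$ genuinely matters only on closed points: since $\cZ$ is $0$-dimensional and $\cX$ is flat over $\Spf O_K$, the set $\cZ_k$ is a finite set of closed points of $\cX_k$, none of which lies on $\cD_k$. So it suffices to find, for $l$ sufficiently divisible, a section $s$ of $\cL^{\otimes l}$ whose restriction to $\cD$ is zero and whose value at each point of $\cZ_k$ is nonzero (equivalently, whose image in each residue field $\kappa(z)$, $z \in \cZ_k$, is nonzero).

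The key step is the following cohomological vanishing argument, done first over the residue field. Consider the special fiber $\cX_k$, which is projective over $k$ with ample line bundle $\cL_k$, the closed subscheme $\cD_k \subseteq \cX_k$, and the finite set $\cZ_k$ disjoint from $\cD_k$. Let $\cI_{\cD}$ be the ideal sheaf of $\cD_k$ and $\cI_{\cZ}$ that of $\cZ_k$; disjointness gives a surjection $\cI_{\cD} \twoheadrightarrow \cI_{\cD}\cdot\cO_{\cZ_k} = \cO_{\cZ_k}$ locally near $\cZ_k$, more precisely the natural map $\cI_{\cD} \to \cI_{\cD}/(\cI_{\cD}\cap\cI_{\cZ})$ realizes the sky-scraper sheaf $\bigoplus_{z\in\cZ_k}\kappa(z)$ as a quotient of $\cI_{\cD}$. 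By Serre vanishing, for $l$ large $H^1(\cX_k, \cI_{\cD}\otimes\cL_k^{\otimes l}\otimes\cI_{\cZ}) = 0$, so $H^0(\cX_k, \cI_{\cD}\otimes\cL_k^{\otimes l}) \to \bigoplus_{z}\kappa(z)\otimes(\cL_k^{\otimes l})|_z$ is surjective; pick a section $\bar s$ mapping to a nowhere-vanishing element, i.e. nonzero in each $\kappa(z)$.

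Next I would lift $\bar s$ to the formal scheme. Write $\cX = \varinjlim \cX_j$ with $\cX_j := \cX \times_{\Spf O_K}\Spec(O_K/\pi^{j+1})$, and let $\cL_j, \cD_j, \cI_{\cD,j}, \cL^{\otimes l}_j$ be the restrictions. For $l$ sufficiently divisible we may also assume $H^1(\cX_k, \cI_{\cD,0}\otimes\cL_0^{\otimes l}) = 0$, which by the standard devissage (using the exact sequences $0 \to \cI_{\cD,0}\otimes\cL_0^{\otimes l}\otimes \pi^j/\pi^{j+1} \to \cI_{\cD,j}\otimes\cL_j^{\otimes l} \to \cI_{\cD,j-1}\otimes\cL_{j-1}^{\otimes l}\to 0$, together with flatness of $\cX$ and $\cD$ over $O_K$ so that $\cI_{\cD}$ is $\pi$-torsion-free) forces $H^1(\cX_j, \cI_{\cD,j}\otimes\cL_j^{\otimes l}) = 0$ and the surjectivity of $H^0(\cX_j, \cI_{\cD,j}\otimes\cL_j^{\otimes l}) \to H^0(\cX_{j-1}, \cI_{\cD,j-1}\otimes\cL_{j-1}^{\otimes l})$ for all $j$. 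Hence one inductively lifts $\bar s = s_0$ to a compatible system $(s_j)_j$ with $s_j \in H^0(\cX_j, \cI_{\cD,j}\otimes\cL_j^{\otimes l})$, which by formal GAGA / the definition of coherent sheaves on the formal scheme defines a section $s \in H^0(\cX, \cI_{\cD}\otimes\cL^{\otimes l})$, i.e. a section of $\cL^{\otimes l}$ vanishing on $\cD$. Since $s$ reduces to $\bar s$ modulo $\pi$ and $\bar s$ is nonzero at each point of $\cZ_k$, so is $s$, which is exactly what is required.

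The main obstacle I expect is the bookkeeping of the cohomological devissage over the truncations $O_K/\pi^{j+1}$: one must be careful that $\cI_{\cD}$ is flat (equivalently $\pi$-torsion-free) over $O_K$ so that tensoring the sequence $0 \to O_K \xrightarrow{\pi^j} O_K \to O_K/\pi^j \to 0$ with $\cI_{\cD}\otimes\cL^{\otimes l}$ stays exact, and that ``$l$ sufficiently divisible'' can be chosen uniformly (it must clear the finitely many pieces of a Serre-vanishing bound plus whatever divisibility is needed so that $\cL^{\otimes l}$ is globally generated near $\cZ_k$ and its stalks at $\cZ_k$ are trivialized); these are routine but need to be stated cleanly. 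Everything else — the disjointness-induced surjection onto the skyscraper sheaf, Serre vanishing, and passing to the formal limit — is standard.
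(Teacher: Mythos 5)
Your argument is correct in substance, but it takes a genuinely different route from the paper's. The paper first \emph{algebraizes}: since $\cX$ is projective over $\Spf O_K$ with ample $\cL$, formal GAGA produces a projective scheme $\X$ over $\Spec O_K$ with ample $\bold L$ and a closed subscheme $\D$ whose $p$-adic completion is $(\cX,\cL,\cD)$. It then lifts a $\cZ_k$-avoiding section of $\cL_k^{\otimes a}$ to $\Gamma(\X,\bold L^{\otimes a})$ (using surjectivity of $\Gamma(\X,\bold L^{\otimes a})\to\Gamma(\cX_k,\cL_k^{\otimes a})$ for $a\gg 0$, which is where Serre vanishing enters there), passes to the affine complement $U$ of the zero locus, and finds by an elementary prime-avoidance argument a function on $U$ vanishing on $\D\cap U$ but not at any point of $\cZ_k$; clearing denominators yields the section of $\cL^{\otimes l}$. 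Your version stays entirely on the formal side: you do the whole thing with the ideal sheaf $\cI_{\cD}\otimes\cL^{\otimes l}$, producing the section on $\cX_k$ by Serre vanishing and then lifting it through the truncations $\cX_j=\cX\times_{O_K}O_K/\pi^{j+1}$ by a d\'evissage using $H^1$-vanishing. The paper's approach is shorter once one grants GAGA and the original Kedlaya argument; yours is more self-contained and avoids passing to the algebraization, at the cost of the cohomological bookkeeping.

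Two small corrections to your write-up. First, you invoke ``flatness of $\cX$ and $\cD$ over $O_K$'' to get $\pi$-torsion-freeness of $\cI_{\cD}$, but $\cD$ is not assumed flat in the lemma; this is also not needed, since $\cI_{\cD}\subseteq\cO_{\cX}$ is automatically $\pi$-torsion-free once $\cO_{\cX}$ is (by flatness of $\cX$ alone). Second, be careful to keep $\cI_{\cD,j}$ consistently equal to $\cI_{\cD}\otimes_{O_K}O_K/\pi^{j+1}$ (the restriction, as you define it) rather than the ideal sheaf of $\cD_j$ in $\cX_j$; these need not coincide if $\cD$ is not flat, but your argument only ever uses the former, and the surjection of $\cI_{\cD}/\pi$ onto $\bigoplus_{z\in\cZ_k}\kappa(z)\otimes\cL_k^{\otimes l}$ still holds because $\cI_{\cD}$ is the unit ideal in a neighborhood of $\cZ$. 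With these caveats the proof goes through.
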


\begin{proof}
Let $\X, {\bold L}, \D$ be the algebraizations of $\cX, \cL, \cD$, 
respectively. Then, 
for sufficiently large $a>0$, the map 
$\Gamma(\X, {\bold L}^{\otimes a}) = 
\Gamma(\cX, \cL^{\otimes a}) \lra \Gamma(\cX_k,\cL^{\otimes a})$ 
is surjective. By this fact and the proof of \cite[Lemma 4]{kedlayamore}, 
we see that, for $a$ sufficiently large, 
there exists a section $s_0$ of ${\bold L}^{\otimes a}$ 
which does not vanish on any point in $\cZ_k$. On the complement of the 
zero locus of $s_0$ in $\X$, 
there exists a regular function vanishing on $\D$, 
not on any point in $\cZ_k$ and this function has the form $s_1s_0^{-b}$ 
for some $b$. Then, for any $l$ divisible by $ab$, the image of the section 
$s_1^{l/ab} \in \Gamma(\X, {\bold L}^{\otimes l})$ in 
$\Gamma(\cX, \cL^{\otimes l})$ has the desired property. 
\end{proof}

\begin{lem}[{cf. \cite[Lemma 5]{kedlayamore}}]\label{lemma5}
Let $\cX$ be a $p$-adic formal scheme projective and flat over $\Spf O_K$ 
with $\dim \cX = n$ and 
let $\cL$ be an ample line bundle on $\cX$. Let $\cD \subseteq \cX$ be a closed
 sub formal scheme and let $\cZ \subseteq \cX$ be a $0$-dimensional 
closed sub formal scheme such that $\cD \cap \cZ$ is empty. Let $m$ be 
an integer with $0 \leq m \leq n$ and let $\cD_1,...,\cD_m$ be divisors 
of $\cX$ such that, for any $T \subseteq [1,m]$, 
$\cD \cap \bigcap_{t \in T}\cD_t$ has codimension at least $|T|$ 
in $\cD$. Then, for $l$ sufficiently divisible, there exist 
sections $s_1,...,s_n$ of $\cL^{\otimes l}$ with no common zero on 
$\cD_k$ such that $s_i$ vanishes on $\cZ \, (1 \leq i \leq n)$ 
and that $s_i$ vanishes on $\cD_i$ for $1 \leq i \leq m$. 
\end{lem}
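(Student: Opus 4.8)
The sections $s_1,\dots,s_n$ will be built one at a time, by induction on $j$ from $0$ to $n$, invoking Lemma \ref{lemma4} at each step; this mimics Kedlaya's proof of \cite[Lemma 5]{kedlayamore}, carried out over $\Spf O_K$. Concretely, for $0\le j\le n$ I will produce sections $s_1,\dots,s_j\in\Gamma(\cX,\cL^{\otimes l})$ (for $l$ sufficiently divisible) such that $s_i$ vanishes on $\cZ$ for $1\le i\le j$, $s_i$ vanishes on $\cD_i$ for $1\le i\le\min(j,m)$, and the following invariant holds: for every subset $T\subseteq\{j+1,\dots,m\}$, every irreducible component of $\cD\cap\bigcap_{t\in T}\cD_t\cap V(s_1)\cap\cdots\cap V(s_j)$ has codimension at least $j+|T|$ in $\cD$. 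For $j=0$ this invariant is exactly the hypothesis on $\cD_1,\dots,\cD_m$. For $j=n$, since $m\le n$ forces $T=\emptyset$, it says that $\cD\cap V(s_1)\cap\cdots\cap V(s_n)$ has all components of codimension $\ge n$ in $\cD$; as $\dim\cD<n$ in the situation where the lemma is applied (there $\cD$ is a divisor), this locus is empty, i.e. the $s_i$ have no common zero on $\cD$, a fortiori on $\cD_k$. (When components of the relevant loci become $0$-dimensional, "$s_j$ does not vanish on the component" simply means "$s_j$ does not vanish at that point".)

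\textbf{The inductive step.} Suppose $s_1,\dots,s_{j-1}$ have been found. Let $\Sigma$ be the finite set consisting, for each $T\subseteq\{j+1,\dots,m\}$, of those irreducible components of $\cD\cap\bigcap_{t\in T}\cD_t\cap V(s_1)\cap\cdots\cap V(s_{j-1})$ having codimension \emph{exactly} $(j-1)+|T|$ in $\cD$. The key observation is that, when $j\le m$, no $Y\in\Sigma$ is contained in $\cD_j$: otherwise $Y$ would lie in $\cD\cap\bigcap_{t\in T\cup\{j\}}\cD_t\cap V(s_1)\cap\cdots\cap V(s_{j-1})$, all of whose components have codimension $\ge(j-1)+|T\cup\{j\}|=j+|T|$ by the invariant for $j-1$ applied to $T\cup\{j\}\subseteq\{j,\dots,m\}$, forcing $\operatorname{codim}_\cD Y\ge j+|T|$ and contradicting $\operatorname{codim}_\cD Y=(j-1)+|T|$. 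Since moreover $Y\subseteq\cD$ is disjoint from $\cZ$, for each $Y\in\Sigma$ we may choose a closed point $z_Y\in Y$ lying on neither $\cD_j$ (when $j\le m$) nor $\cZ$. Put $\cZ_j:=\{z_Y\mid Y\in\Sigma\}$, a finite set of closed points, and set $\cC_j:=\cZ\cup\cD_j$ if $j\le m$ and $\cC_j:=\cZ$ if $j>m$; then $\cC_j\cap\cZ_j=\emptyset$ by construction. Applying Lemma \ref{lemma4} with $\cC_j$ in place of $\cD$ and $\cZ_j$ in place of $\cZ$, we obtain, for $l$ sufficiently divisible, a section $s_j\in\Gamma(\cX,\cL^{\otimes l})$ that vanishes on $\cC_j$ but at no point of $\cZ_j$; this takes care of the required vanishing of $s_j$ on $\cZ$ (and on $\cD_j$ when $j\le m$).

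\textbf{Maintaining the invariant, and the main difficulty.} One then checks the invariant for $j$. Fix $T\subseteq\{j+1,\dots,m\}$ and an irreducible component $W$ of $\cD\cap\bigcap_{t\in T}\cD_t\cap V(s_1)\cap\cdots\cap V(s_j)$. If $W\subseteq\cD_j$ then $W$ lies in $\cD\cap\bigcap_{t\in T\cup\{j\}}\cD_t\cap V(s_1)\cap\cdots\cap V(s_{j-1})$, so $\operatorname{codim}_\cD W\ge(j-1)+(|T|+1)=j+|T|$ by the invariant for $j-1$. Otherwise $W$ is a component of $Y\cap V(s_j)$ for some component $Y$ of $\cD\cap\bigcap_{t\in T}\cD_t\cap V(s_1)\cap\cdots\cap V(s_{j-1})$ with $Y\not\subseteq V(s_j)$; if $\operatorname{codim}_\cD Y>(j-1)+|T|$ we are done, while if it equals $(j-1)+|T|$ then $Y\in\Sigma$, so $s_j$ is nonvanishing at $z_Y\in Y$, whence $V(s_j)$ meets the irreducible $Y$ in an effective Cartier divisor and $\operatorname{codim}_\cD W=\operatorname{codim}_\cD Y+1=j+|T|$. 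This bookkeeping — together with the fact that each $\cD_i$ is a divisor, so that intersecting with $\cD_j$ or with $V(s_j)$ drops dimension by at most one (Krull's principal ideal theorem) — is the routine but delicate heart of the argument, and is where I expect the only real care to be needed; the subtle point is the interaction between "$s_j$ vanishes on $\cD_j$" and the codimension count, handled above by the dichotomy $W\subseteq\cD_j$ or not. Finally, each of the $n$ steps requires merely that $l$ be sufficiently divisible, and replacing a section by a power does not change its zero locus; so, after fixing a single sufficiently divisible $l$ and raising the earlier $s_i$ to suitable powers, all of $s_1,\dots,s_n$ lie in $\Gamma(\cX,\cL^{\otimes l})$ and have the required properties.
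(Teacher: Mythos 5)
Your argument is the same inductive scheme the paper uses (itself a direct transposition of Kedlaya's proof of \cite[Lemma 5]{kedlayamore}): build the $s_j$ one at a time via Lemma \ref{lemma4}, preserving a codimension invariant over $T\subseteq\{j+1,\dots,m\}$, and at the end take a common $l$ and raise each section to the appropriate power. You spell out the bookkeeping that the paper leaves implicit, and you correctly flag that the final emptiness of the common zero locus on $\cD_k$ uses that $\cD$ has positive codimension (which holds in all applications, where $\cD$ is a divisor).

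One small imprecision in your ``maintaining the invariant'' step: in the ``otherwise'' branch you assert that $W$ is a component of $Y\cap V(s_j)$ for some component $Y$ of the $(j-1)$-level intersection with $Y\not\subseteq V(s_j)$. This fails when $W$ is itself a component of that intersection lying inside $V(s_j)$ (then the only such $Y$ is $W$). But that case is disposed of by the same device: if $\operatorname{codim}_\cD W=(j-1)+|T|$ then $W\in\Sigma$ and $s_j(z_W)\ne 0$, contradicting $W\subseteq V(s_j)$; hence $\operatorname{codim}_\cD W\ge j+|T|$ anyway. With that case added, the argument is complete and matches the paper's.
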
 

\begin{proof} 
Although the following proof is the same as that in 
\cite[Lemma 5]{kedlayamore}, we reproduce the proof here for the reader's 
convenience. We construct the desired section inductively. Suppose that, for $0 \leq j < n$, there exist positive integers 
$l_i$ and sections $s'_i$ of $\cL^{\otimes l_i}$ \,$(1 \leq i \leq j)$ 
satisfying the following conditions: 
\begin{enumerate}
\item[(a)] Each $s'_i$ vanishes on $\cZ$. 
\item[(b)] For $1\leq i \leq \min (j,m)$, $s'_i$ vanishes on $\cD_i$. 
\item[(c)] For any $T \subseteq [j+1,m]$, 
$Y_{j,T} := \cD \cap \bigcap_{t\in T}\cD_t \cap 
\bigcap_{i=1}^j \{\text{zero locus of $s'_i$}\}$ 
has codimension $\geq j + |T|$ in $\cD$.  
\end{enumerate}
(This is true for $j=0$ by hypothesis.) Then let $\cZ_j$ be a 
$0$-dimensional closed sub formal scheme of $\cD - \cD_{j+1}$ 
which meets each irreducible component of $Y_{j,T}$ having 
codimension $j + |T|$ in $\cD$, for each $T \subseteq [j+2,m]$. 
(By (c), none of these components are contained in $\cD \cap \cD_{j+1}$. 
Hence such $\cZ_j$ actually exists.) By Lemma \ref{lemma4}, 
for $l_{j+1}$ sufficiently divisible, there exists a section $s'_{j+1}$ 
of $\cL^{\otimes l_{j+1}}$ vanishing along $\cZ \cup \cD_{j+1}$, not on 
$(\cZ_j)_k$. Then, with this $s'_{j+1}$, the conditions (a), (b), (c) are 
satisfied with $j$ replaced by $j+1$. \par 
So we can take sections $s'_1,...,s'_n$ such that the conditions 
(a), (b), (c) are satisfied with $j=n$. Let $l_0$ be the least common 
multiple of $l_j$'s. Then for any $l$ divisible by $l_0$, 
the sections $s_i := {s'_i}^{l/l_i}\,(1 \leq i \leq n)$ of $\cL^{\otimes l}$ 
satisfy the desired properties. 
\end{proof}

\begin{lem}[{cf. \cite[Lemma 6]{kedlayamore}}]\label{lemma6}
Let $\cX$ be a $p$-adic formal scheme projective and smooth over $\Spf O_K$ 
with $\dim \cX = n$ and 
let $\cL$ be an ample line bundle on $\cX$. Let 
$\bigcup_{i=1}^m \cD_i$ be a relative simple normal crossing divisor 
in $\cX$ $($each $\cD_i$ being smooth$)$ and let $\ti{x}$ be a closed 
sub formal scheme of $\bigcap_{i=1}^m \cD_i$ connected and etale over 
$\Spf O_K$. Let $\alpha$ be a section of $\cL$ whose zero locus 
$\cD$ does not meet $\ti{x}$. Then, for $l$ sufficiently divisible, 
there exist sections $\delta_1,...,\delta_n$ of $\cL^{\otimes l}$ 
satisfying the following conditions$:$ 
\begin{enumerate}
\item 
$\delta_i$'s have no common zero on $\cD$, that is, 
$\bigcap_{i=1}^n \{\text{zero locus of $\delta_i$}\} \cap \cD$ is 
empty. Hence $\alpha^l, \delta_1,...,\delta_n$ define a finite 
morphism $f: \cX \lra \fP^n_{O_K}$ by Lemma \ref{lemma3}. 
\item 
$f$ is unramified at $\ti{x}_k$. 
\item 
For $1 \leq i \leq m$, $\delta_i$ vanishes on $\cD_i$. 
\end{enumerate}
\end{lem}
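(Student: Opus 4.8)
The plan is to follow Kedlaya's proof of \cite[Lemma 6]{kedlayamore}, transporting it to the formal setting, and to feed into it the formal-scheme versions of the auxiliary lemmas already established above (Lemmas \ref{lemma3}, \ref{lemma4}, \ref{lemma5}). First I would reduce to a statement about the special fiber $\cX_k$: since $\ti{x}$ is etale over $\Spf O_K$ it is connected with a single closed point $\ti{x}_k$, and the conditions (1), (2), (3) are all conditions that can be checked after reduction modulo $\pi$ (for (2), unramifiedness at $\ti{x}_k$ is by definition a condition on the special fiber; for (1), emptiness of an intersection of closed formal subschemes is the same as emptiness after reduction modulo $\pi$). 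So it suffices to produce sections $\delta_1,\dots,\delta_n$ of $\cL^{\otimes l}$ whose reductions have the required properties on $\cX_k$, which is a smooth projective $k$-variety; the surjectivity $\Gamma(\cX,\cL^{\otimes l}) \to \Gamma(\cX_k,\cL^{\otimes l})$ for $l$ sufficiently divisible (as in the proof of Lemma \ref{lemma4}) then lets us lift.

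The construction of the $\delta_i$ proceeds by the same two-part argument as in Kedlaya. For condition (3) together with a genericity requirement, I would apply Lemma \ref{lemma5} with the role of "$\cD$" there played by the divisor $\cD$ (the zero locus of $\alpha$), the $0$-dimensional subscheme $\cZ$ taken to be empty (or a convenient finite set in $\ti{x}$), and the divisors $\cD_1,\dots,\cD_m$ as given: the simple-normal-crossing hypothesis guarantees that for any $T\subseteq[1,m]$ the intersection $\cD\cap\bigcap_{t\in T}\cD_t$ has codimension at least $|T|$ in $\cD$ (here one uses that $\cD$ does not meet $\ti{x}\subseteq\bigcap\cD_i$, so $\cD$ meets the strata of the normal crossing divisor properly). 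This yields sections $\delta_1,\dots,\delta_n$ of some $\cL^{\otimes l}$ with no common zero on $\cD_k$ — giving (1) and hence, via Lemma \ref{lemma3}, the finite morphism $f$ — and with $\delta_i$ vanishing on $\cD_i$ for $1\le i\le m$, giving (3). The remaining point is (2): one must arrange that the differentials of the functions $\delta_i/\alpha^l$ are linearly independent at $\ti{x}_k$. Since $\ti{x}_k$ lies on $\bigcap_{i=1}^m\cD_i$ and each $\delta_i$ ($i\le m$) vanishes on $\cD_i$, near $\ti{x}_k$ the function $\delta_i/\alpha^l$ vanishes on $\cD_i$, so its differential at $\ti{x}_k$ is (a nonzero multiple of) the conormal direction of $\cD_i$; these $m$ conormal directions are linearly independent because $\bigcup\cD_i$ has normal crossings. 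For the remaining indices $m<i\le n$ one chooses the $\delta_i$ generically: the linear system $\cL^{\otimes l}$ separates tangent directions at $\ti{x}_k$ for $l$ large, so a general choice of $\delta_{m+1},\dots,\delta_n$ (subject to the vanishing conditions already imposed, which only constrain them along proper closed subsets avoiding generic behavior at $\ti{x}_k$) completes the differentials to a basis. This is exactly where one invokes that $k$ — and hence $\cX_k$ — has enough points; here no uncountability is needed, a Zariski-general choice suffices, but one does need the base field infinite or the linear system large enough, both of which hold.

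The main obstacle I expect is bookkeeping at the point $\ti{x}_k$: one must run the inductive construction of Lemma \ref{lemma5} while simultaneously keeping the sections "in general position" with respect to the jet of order one at $\ti{x}_k$, i.e. checking that the codimension-counting in Lemma \ref{lemma5} is compatible with also demanding that $d(\delta_i/\alpha^l)$ at $\ti{x}_k$ avoid a prescribed proper linear subspace of the cotangent space. In Kedlaya this is handled by noting that imposing the value of a section and its first-order jet at a single point are finitely many linear conditions, satisfiable once $l$ is large, and do not interfere with the vanishing conditions along $\cD$, $\cZ$, $\cD_i$ since $\ti{x}_k\notin\cD$ and $\ti{x}_k$ can be kept off the auxiliary $0$-dimensional schemes $\cZ_j$ appearing in the induction. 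Transplanting this to formal schemes is routine once the algebraization and the mod-$\pi$ surjectivity of global sections are in hand, so the formal-scheme aspect contributes no real difficulty beyond what is already packaged in Lemmas \ref{lemma3}--\ref{lemma5}.
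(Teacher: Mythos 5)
The gap is in your treatment of condition (2). You apply Lemma \ref{lemma5} and then assert that, for $i\le m$, the differential $d(\delta_i/\alpha^l)$ at $\ti{x}_k$ is a nonzero multiple of the conormal direction of $\cD_i$. But Lemma \ref{lemma5} only guarantees that $\delta_i$ vanishes along $\cD_i$ and on $\cZ$; it says nothing about the first-order jet of $\delta_i$ at $\ti{x}_k$, and the normal derivative of $\delta_i/\alpha^l$ along $\cD_i$ could perfectly well vanish there. For $i>m$, making $d(\delta_i/\alpha^l)$ complete a basis of the cotangent space is again a jet condition at $\ti{x}_k$ that the inductive construction of Lemma \ref{lemma5} never tracks. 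You flag this in your last paragraph and propose threading jet conditions through the induction, but that is not what Kedlaya (or the paper) does: it would require proving a strengthened Lemma \ref{lemma5}, and the accompanying generic-position argument is an extra burden (and would in principle ask for $k$ infinite, which Lemma \ref{lemma6} does not assume).

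The paper circumvents jet control inside Lemma \ref{lemma5} entirely by a two-part construction. It algebraizes to $\X$, ${\bold L}$, $\D$; then ${\bold U} := \X - \D$ is affine, so one can directly write down regular functions $\ti{f}_1,\dots,\ti{f}_n$ on ${\bold U}$ vanishing on $\D_i$ for $i\le m$ and giving a map ${\bold U}\to\Af^n_{O_K}$ unramified at $\ti{x}_k$ --- the jet condition is imposed there explicitly and cheaply precisely because ${\bold U}$ is affine. Clearing denominators $\ti{f}_i = \ti\beta_i\ti\alpha^{-j_i}$ gives sections $\beta_i$ of ${\bold L}^{\otimes j_i}$. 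Lemma \ref{lemma5} (with $\cZ=\ti{x}$) then supplies $\gamma_i$ vanishing on $\ti{x}$ and on $\cD_i$ for $i\le m$, with no common zero on $\cD$. The crucial move you are missing is to set $\delta_i := \beta_i\alpha^{2l-j_i} + \gamma_i^2$: on $\cD$ this equals $\gamma_i^2$, giving (1); it vanishes on $\cD_i$, giving (3); and since $\gamma_i^2$ vanishes to second order at $\ti{x}_k$, it does not perturb the first-order jet, so $d(\delta_i/\alpha^{2l})=df_i$ at $\ti{x}_k$ and (2) holds. The square is exactly what lets Lemma \ref{lemma5} be used without any jet bookkeeping, and is the idea your proposal needs to supply.
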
 

\begin{proof} 
Let $\X, \bigcup_{i=1}^m \D_i, {\bold L}, \D$ be the algebraization of 
$\cX, \bigcup_{i=1}^m \cD_i, \cL, \cD$. Then $\alpha$ induces 
a section $\ti{\alpha}$ 
of ${\bold L}$ with zero locus $\D$. Hence ${\bold U} := \X - \D$ 
is an affine scheme. Then we can find 
$\ti{f}_1,..., \ti{f}_n \in \Gamma({\bold U}, \cO)$ such that 
$\ti{f}_i$ vanishes on $\D_i$ for $1 \leq i \leq m$ 
and that they induces a morphism 
${\bold U} \lra \Af^n_{O_K}$ which is unramified at 
$\ti{x}_k \in \cX_k = \X_k \subseteq \X$. 
Write $\ti{f}_i = \ti{\beta}_i\ti{\alpha}^{-j_i}$ for some positive integer 
$j_i$ and $\beta_i \in \Gamma(\X,{\bold L}^{\otimes j_i})$. \par 
Put $\cU := \cX - \cD$ and denote the image of $\ti{f}_i$ 
(resp. $\ti{\beta}_i$) in 
$\Gamma(\cU,\cO)$ (resp. $\Gamma(\cX, \cL^{\otimes j_i})$ 
by $f_i$ (resp. $\beta_i$). Then $f_i$'s induce a map 
$\cU \lra \fAf^n_{O_K}$ which is unramified at $\ti{x}_k$. 
By Lemma \ref{lemma5}, 
for $l$ sufficiently divisible, we can choose $\gamma_1,...,\gamma_n \in 
\Gamma(\cX,\cL^{\otimes l})$ such that $\gamma_i\,(1 \leq i \leq n)$ vanishes 
on $\ti{x}$, $\gamma_i$ vanishes on $\cD_i$ for $1 \leq i \leq m$ and that 
$\gamma_1,...,\gamma_m$ has no common zero on $\cD$. Now let us put 
$\delta_i := \beta_i \alpha^{2l-j_i} + \gamma_i^2 \in \Gamma(\cX,\cL^{\otimes 
2l})$ for $1 \leq i \leq n$. 
Then we can check that this $\delta_i$'s satisfy the required properties. 
\end{proof}

Now we give a proof of Theorem \ref{moreformal}: 

\begin{proof}[Proof of Theorem \ref{moreformal}]
The proof is essentially the same as that of \cite[Theorem 2]{kedlayamore}. 
Let $\cL$ be an ample line bundle on $\ol{\cX}$ and let $\ti{x}$ be 
a lift of $x$ in $\cD_{\ni x}$. Since we may replace $\cL$ by its 
tensor power, we may assume by Lemma \ref{lemma4} that there exists a 
section $s$ of $\cL$ which vanishes on $\cD_{\not\ni x}$ and not on 
$x$. By Lemma \ref{lemma6}, there exist a positive integer $m$ and 
sections $s_1,...,s_n$ of $\cL^{\otimes m}$ satisfying the following 
conditions: 
\begin{enumerate}
\item[(a)] 
$s_i$'s have no common zero on $\cD_{\not\ni x}$ and so 
$s^m, s_1, ..., s_n$ defines a finite morphism 
$g: \ol{\cX} \lra \fP^n_{O_K}$ by Lemma \ref{lemma3}. 
\item[(b)] 
$g$ is unramified at $x$. 
\item[(c)] 
$s_i$ vanishes on $\cD_i$ for $1 \leq i \leq r$. 
\end{enumerate}
Let $\cD$ be the zero locus of $s$. (Then we have $\cD_{\not\ni x} 
\subseteq \cD$ and $\cD$ does not contain $x$.) The locus on 
$\ol{\cX} - \cD$ where $g$ is unramified is open. Let $\cE$ be 
its complement in $\cX$ and endow $\cE$ a structure of a closed 
sub formal scheme of $\cX$ satisfying $\cD \subseteq \cE$. 
(Note that we have $x \notin \cE$.) 
Then, by Lemma \ref{lemma4}, there exists some $q$ and $t \in 
\Gamma(\ol{\cX}, \cL^{\otimes qm})$ vanising on $\cE$ and not on $x$. 
Let $\cZ$ be the vanishing locus of $t$ (so $\cE \subseteq \cZ, 
x \notin \cZ$ and $\cZ$ is pure codimension $1$ in $\ol{\cX}$). 
For any $T \subseteq [1,r]$, $\bigcap_{t \in T}\cD_t$ is irreducible 
by the assumption $(*)_x$ and it contains $x$. Hence 
$\cZ \cap \bigcap_{t \in T}\cD_t$ has codimension $|T|$ in $\cZ$. 
So we can apply Lemma \ref{lemma5} and we can conclude that 
there exist some $l$ and sections $t_1,..., t_n \in 
\Gamma(\ol{\cX}, \cL^{\otimes lqm})$ with no common zero on $\cZ$ such 
that $t_i$ vanishes on $\cD_i$ for $1 \leq i \leq r$. Now let us put 
\begin{align*}
u_0 & := t^{pl}, \\
u_i & := s_is^{m(pq-1)}t^{p(l-1)} + t_i^p \,\,\,\,(1 \leq i \leq n). 
\end{align*}
Then they defines a finite morphism $f: \ol{\cX} \lra \fP^n_{O_K}$ by 
Lemma \ref{lemma3}. \par 
For $y \in (\cX-\cZ)_k$, $g$ is unramified at $y$ by assumption, that is, 
$d(s_i/s^m)$'s are linearly independent at $y$. On the other hand, 
we see that $d(u_i/u_0)$ has the form $(s^{qm}/t)^p d(s_i/s^m)$ modulo $p$. 
So $d(u_i/u_0)$'s are also linearly independent at $y$. Hence $f$ is 
unramified at $y$. It is easy to check that $f$ satisfies the 
conditions (2) and (3) in the statement of the theorem. Hence, to prove the 
theorem, it suffices to prove the flatness of $f$, that is, the 
flatness of $f$ modulo $\pi^m O_K$ for all $m \in \N$. \par 
Note that $f$ modulo $\pi O_K$ is flat, because $\ol{\cX}_k, \P^n_k$ are 
regular of dimension $n$ and all the fibers of $f$ modulo $\pi O_K$ is 
$0$-dimensional (see \cite[6.1]{ega}). So it suffices to prove the 
following claim to finish the proof of the theorem: \\
\quad \\
{\bf claim.} \,\,\, 
Let $f_m: \cP_m \lra \cQ_m$ be a finite morphism of flat 
$O_K/\pi^mO_K$-schemes such that $f_m$ modulo $\pi O_K$ is flat. 
Then $f_m$ is flat. \\
\quad \\
For $\nu \leq m$, let $f_{\nu}: \cP_{\nu} \lra \cQ_{\nu}$ be 
$f_m$ modulo $\pi^{\nu} O_K$. To prove the claim, we may assume that 
$f_{\nu}$ is flat for $\nu \leq m-1$ as induction hypothesis. 
Moreover, since the assertion is local on $\cQ_m$, we may assume that 
$\cQ_m = \Spec A_m$ is the spectrum of a local ring. Let us put 
$\cQ_{\nu} := \Spec A_{\nu}$. Then $\cP_{\nu}$, being finite over $\cQ_{\nu}$ 
is also affine (so put $\cP_{\nu} =: \Spec B_{\nu}$) and 
since $\cP_{m-1}$ is finite flat over $\cQ_{m-1}$, there exists some 
$\mu \in \N$ and an isomorphism $\varphi_{m-1}: A_{m-1}^{\oplus \mu} 
\os{=}{\lra} B_{m-1}$ as $A_{m-1}$-modules. Let 
$\varphi_{\nu}: A_{\nu}^{\oplus \mu} \os{=}{\lra} B_{\nu}$ be $\varphi_m$ 
modulo $\nu\,(\nu \leq m-1)$ and let us take a homomorphism 
$\varphi_m: A_m^{\oplus \mu} \lra B_m$ as $A_m$-modules lifting 
$\varphi_{m-1}$. Then we have the following diagram whose horizontal 
lines are exact by flatness of $\cP_m, \cQ_m$ over $O_K/\pi^m O_K$: 
\begin{equation*}
\begin{CD}
0 @>>> A_{m-1}^{\oplus \mu} @>{\pi}>> A_m^{\oplus \mu} @>>> A_1^{\oplus \mu} 
@>>> 0 \\ 
@. @V{\|}V{\varphi_{m-1}}V @VV{\varphi_m}V @V{\|}V{\varphi_1}V \\
0 @>>> B_{m-1} @>{\pi}>> B_m @>>> B_1 @>>> 0. 
\end{CD}
\end{equation*}
From this diagram, we see that $\varphi_m$ is also an isomorphism and so 
$f_m$ is flat. Hence the claim is proved and so the proof of the theorem 
is finished. 
\end{proof}

Let $(\X,\ol{\X})$ be a smooth pair over $O_K$, let $x$ be a closed 
point on $\ol{\X}_k$ and let us put 
$\D := \ol{\X} - \X = \bigcup_{i=1}^r \D_i$ 
(where $\D_i$'s are connected and smooth over $\Spec O_K$). 
Then we can define $I_{\ni x}, I_{\not\ni x}, 
\D_{\ni x}, \D_{\not\ni x}$ as in the case of 
formal smooth pair. 
We say that the smooth pair $(\X,\ol{\X})$ over $O_K$ 
satisfies the condition 
$(*)_x$ if, for any $I \subseteq I_{\ni x}$, 
$\bigcap_{i \in I}\D_i$ is irreducible. With this terminology, 
we have the following algebraic variant of Theorem \ref{moreformal} as 
a corollary: 

\begin{cor}\label{morealg}
Let $(\X,\ol{\X})$ be a smooth pair over $O_K$ with 
$\dim (\ol{\X}/O_K) = n$ and 
$\ol{\X}$ projective over $\Spec O_K$. Let $x$ be a closed point 
in $\ol{\X}$ and assume that $(\X,\ol{\X})$ satisfies the condition 
$(*)_x$. Let us put $\D := \ol{\X} - \X$, let us define 
$\D_{\ni x}, \D_{\not\ni x}$ as above and let $\D_{\ni x} = 
\bigcup_{i=1}^r \D_i$ be the decomposition of $\D_{\ni x}$ into 
irreducible components. Then there exists a finite flat morphism 
$f: \ol{\X} \lra \P^n_{O_K}$ satisfying the following conditions$:$ 
\begin{enumerate}
\item 
$f$ is etale on a neighborhood of $f^{-1}(\Af^n_{k})$. 
\item 
We have $\D_{\not\ni x} \subseteq f^{-1}(H_0)$ and 
$x \in f^{-1}(\Af^n_{O_K})$, where $H_0 := 
\P^n_{O_K} - \Af^n_{O_K} \subseteq \P^n_{O_K}$ is the 
hyperplane at infinity. 
\item 
For $1 \leq i \leq r$, we have $\D_i \subseteq f^{-1}(H_i)$, where 
$H_i \subseteq \P^n_{O_K}$ is the $i$-th coordinate hyperplane. 
$($Hence we see that 
$\D_i \cap f^{-1}(V) \subseteq f^{-1}(H_i \cap V)$ 
is an open and closed immersion if $f^{-1}(V) \lra V$ is etale.$)$
\end{enumerate}
\end{cor}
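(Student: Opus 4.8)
The plan is to deduce Corollary \ref{morealg} from Theorem \ref{moreformal} by passing to $p$-adic completions and then algebraizing the resulting formal morphism. First I would observe that $\wh{\ol\X}$ is a $p$-adic formal scheme projective over $\Spf O_K$, that $\wh\X \hra \wh{\ol\X}$ is an open immersion whose complement $\wh\D = \bigcup_{i=1}^r \wh\D_i$ is a relative simple normal crossing divisor, and that $(\wh\X,\wh{\ol\X})$ has the same special fibre as $(\X,\ol\X)$, so that $x$ is a closed point of $\wh{\ol\X}_k$ and $\wh\D_{\not\ni x}, \wh\D_i$ are the completions of $\D_{\not\ni x}, \D_i$. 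The condition $(*)_x$ passes to the completion: each intersection $\bigcap_{i\in I}\D_i$ $(I \subseteq I_{\ni x})$ is smooth and proper over $O_K$ and nonempty (it contains $x$), hence its generic fibre is a nonempty open of the irreducible scheme $\bigcap_{i\in I}\D_i$, so its special fibre is connected (e.g. by Stein factorization, $O_K$ being local), and therefore its completion $\bigcap_{i\in I}\wh\D_i$ is connected and regular, hence irreducible. Theorem \ref{moreformal}, applied to $(\wh\X,\wh{\ol\X})$ and $x$, then furnishes a finite flat morphism $\wh f : \wh{\ol\X} \lra \fP^n_{O_K}$ satisfying the formal analogues of conditions (1)--(3).

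Next I would algebraize $\wh f$. By Grothendieck's existence theorem for the proper $O_K$-scheme $\ol\X$ (\cite{ega}), the natural map $\Hom_{O_K}(\ol\X,\P^n_{O_K}) \lra \Hom_{\Spf O_K}(\wh{\ol\X},\fP^n_{O_K})$ is bijective: a morphism to projective space is the datum of an invertible sheaf together with finitely many generating global sections; the invertible sheaf $\wh f^*\cO(1)$ and its sections algebraize (a coherent algebraization of $\wh f^*\cO(1)$ is $O_K$-flat and invertible on the special fibre, hence invertible, since its non-invertible locus is a closed subset of $\ol\X$ disjoint from $\ol\X_k$); and generation is preserved because the common zero locus of the algebraized sections is a closed subscheme with empty completion, hence --- being closed in the proper $O_K$-scheme $\ol\X$ and disjoint from $\ol\X_k$ --- empty. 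Here I use repeatedly the principle that a closed subset of $\ol\X$ which is proper over $O_K$ and disjoint from the special fibre $\ol\X_k$ must be empty, since its image in $\Spec O_K$ would otherwise be a closed set containing the generic point but not the closed point. This produces a morphism $f : \ol\X \lra \P^n_{O_K}$ whose $p$-adic completion is $\wh f$.

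It then remains to transfer the properties, using the same emptiness principle. For finiteness: $f$ is proper, its special fibre $f_k$ --- the reduction mod $\pi$ of the finite morphism $\wh f$ --- is finite hence quasi-finite, so the non-quasi-finite locus of $f$ is closed and disjoint from $\ol\X_k$, hence empty, and a proper quasi-finite morphism is finite. Flatness of $f$ then follows from \cite[6.1]{ega}, since $f$ is finite and $\ol\X$, $\P^n_{O_K}$ are regular and equidimensional of dimension $n+1$ (alternatively, by the fibral flatness criterion, $f_k$ and $f_K$ being finite between smooth schemes of equal dimension, hence flat). Conditions (2) and (3) descend because a global section of $f^*\cO(1)$ whose completion vanishes on $\wh\D_i$ (resp. $\wh\D_{\not\ni x}$) already vanishes on $\D_i$ (resp. $\D_{\not\ni x}$) by faithfulness of completion, while $x \in f^{-1}(\Af^n_{O_K})$ precisely because $\wh f(x) \in \fAf^n_{O_K}$; the parenthetical statement in (3) is routine, comparing the dimensions of the smooth closed subscheme $\D_i \cap f^{-1}(V)$ and the smooth scheme $f^{-1}(H_i \cap V)$ when $f^{-1}(V) \to V$ is etale. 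The step needing the most care --- and the reason (1) is stated with a neighborhood of $f^{-1}(\Af^n_k)$ rather than with $f^{-1}(\Af^n_{O_K})$ itself as in Theorem \ref{moreformal} --- is condition (1): the $p$-adic completion detects only the special fibre, so one recovers merely that the open etale locus of $f$ contains the etale locus of $\wh f$, which contains $\wh f^{-1}(\fAf^n_{O_K})$ and hence $f^{-1}(\Af^n_k)$, giving etaleness on a neighborhood of $f^{-1}(\Af^n_k)$ but not necessarily on all of $f^{-1}(\Af^n_{O_K})$.
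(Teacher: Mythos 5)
Your proof is correct and follows essentially the same route as the paper: apply Theorem \ref{moreformal} to the $p$-adic completion, algebraize the resulting morphism via formal GAGA / Grothendieck existence, and transfer finiteness, flatness, and conditions (1)--(3) back down. The only small deviations are that you spell out why $(*)_x$ passes to completions (which the paper leaves implicit) and that you deduce flatness of $f$ directly from miracle flatness rather than by GAGA-descent from the flatness of $\wh f$ as the paper does; both are fine.
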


\begin{proof} 
Since the formal smooth pair $(\wh{\X},\wh{\ol{\X}})$ satisfies the 
assumption of Theorem \ref{moreformal}, we have a finite flat morphism 
$\wh{f}: \wh{\ol{\X}} \lra \fP^n_{O_K}$ satisfying the conclusion of 
Theorem \ref{moreformal}. Note that the morphism $\wh{f}$ is induced by 
some sections $u_0,...,u_n \in \Gamma(\wh{\ol{\X}}, \cL)$ for some 
ample line bundle $\cL$ on $\wh{\ol{\X}}$. Let ${\bold L}$ be the 
algebraization of $\cL$, which is an ample line bundle on $\ol{\X}$. 
Then $u_0,...,u_n$ can be regarded as sections in 
$\Gamma(\ol{\X},{\bold L})$ and they do not have common zero. 
So they induce a morphism $f: \ol{\X} \lra \fP^n_{O_K}$, which is 
the algebraization of $\wh{f}$, and we can see that it is finite by the 
argument of \cite[Lemma 3]{kedlayamore}. Then $f_*\cO_{\ol{\X}}$ is 
a coherent sheaf on $\cO_{\P^n_{O_K}}$ such that 
$f_*\cO_{\ol{\X}} \otimes_{\cO_{\P^n_{O_K}}} \cO_{\fP^n_{O_K}} = 
\wh{f}_*\cO_{\wh{\ol{\X}}}$ is flat over $\cO_{\fP^n_{O_K}}$. Hence 
$f_*\cO_{\ol{\X}}$ is flat over $\cO_{\P^n_{O_K}}$ by GAGA. So $f$ is 
a finite flat morphism. We can check that this $f$ satisfies the required 
properties by using Theorem \ref{moreformal} and GAGA. 
\end{proof} 

We prove one more preliminary lemma before the proof of the second main 
theorem. Let $(\X,\ol{\X})$ be a smooth pair over $O_K$ and put 
$\D := \ol{\X} - \X = \bigcup_{i=1}^r \D_i$, where $\D_i$'s are connected 
and smooth over $O_K$. Then we call 
a closed subscheme ${\bold C} \hra \ol{\X}$ {\it toric} if, 
locally on $\ol{\X}$, 
${\bold C}$ has the form $\bigcap_{i \in I}\D_i$ for some 
$I \subseteq [1,r]$. A morphism $\varphi: \ol{\X}' \lra \ol{\X}$ is called 
{\it a toric blow-up} if it is a blow-up with toric center, and for 
$x \in \ol{\X}$, the morphism $\varphi$ is called 
{\it a toric blow-up away from} $x$ if it is a toric blow-up whose center 
does not contain $x$. If $\varphi: \ol{\X}' \lra \ol{\X}$ is a toric blow-up 
and if we put $\X' := \varphi^{-1}(\X)$, it induces the strict morphism of 
smooth pairs $(\X',\ol{\X}') \lra (\X,\ol{\X})$ over $O_K$, which we also 
denote by $\varphi$. Then we have the following lemma: 

\begin{lem}\label{lemma0}
Let $(\X,\ol{\X})$ be a smooth pair over $O_K$ and let $x$ be a closed 
point of $\ol{\X}_k$. Then there exists a morphism of smooth pairs 
$\varphi: (\X',\ol{\X}') \lra (\X,\ol{\X})$ over $O_K$ which is a compsoition 
of toric blow-ups away from $x$ such that $(\X',\ol{\X}')$ satisfies 
$(*)_x$. 
\end{lem}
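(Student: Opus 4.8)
The plan is to achieve $(*)_x$ by repeatedly blowing up, away from $x$, the ``excess components'' of the multi-intersections of boundary components through $x$, the process being governed by a complexity count. Write $\D = \ol{\X} - \X = \bigcup_{i=1}^r \D_i$; we may assume $\ol{\X}$ connected (hence irreducible, being smooth over $O_K$). Recall that $(*)_x$ asks that $\bigcap_{i\in I}\D_i$ be irreducible for all $I \subseteq I_{\ni x}$; since every such intersection contains $x$ it is non-empty, so the real content lies in the subsets $I$ with $|I| \ge 2$ (the case $|I|\le 1$ being automatic because each $\D_i$ is connected), and $(*)_x$ fails exactly when some $\bigcap_{i\in I}\D_i$ with $I \subseteq I_{\ni x}$, $|I|\ge 2$, has a connected component $W$ with $x \notin W$.

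To a smooth pair $(\X,\ol{\X})$ over $O_K$ attach the integer $N(\X,\ol{\X}) := \sum_{I \subseteq I_{\ni x}} c(I)$, where $c(I)$ is the number of connected components of $\bigcap_{i\in I}\D_i$; this is a positive integer, bounded below by $2^{|I_{\ni x}|}$, and equal to $2^{|I_{\ni x}|}$ exactly when $(*)_x$ holds. Suppose $(*)_x$ fails, choose $I \subseteq I_{\ni x}$ of maximal cardinality with $\bigcap_{i\in I}\D_i$ reducible, and let $W$ be the union of the connected components of $\bigcap_{i\in I}\D_i$ not containing $x$. Then $W$ is a toric closed subscheme (near each of its points it agrees with $\bigcap_{i\in I}\D_i$), it is smooth over $O_K$ (being a union of connected components of the $O_K$-smooth scheme $\bigcap_{i\in I}\D_i$), and $x \notin W$. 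Let $\varphi\colon \ol{\X}' \to \ol{\X}$ be the blow-up along $W$ and set $\X' := \varphi^{-1}(\X)$. As $W \subseteq \D$, $\varphi$ is an isomorphism over $\X$, so $(\X',\ol{\X}') \to (\X,\ol{\X})$ is a toric blow-up away from $x$; one must verify that $(\X',\ol{\X}')$ is again a smooth pair over $O_K$, i.e.\ that $\ol{\X}'$ is $O_K$-smooth and its boundary $\D'$ — the strict transforms $\D_i'$ of the $\D_i$ together with the exceptional divisors of $\varphi$ — is a relative simple normal crossing divisor with $O_K$-smooth connected components. This is the standard toroidal local computation: etale-locally $(\ol{\X},\D,W)$ is $(\Af^n_{O_K},\ \{x_1\cdots x_s=0\},\ \{x_{j_1}=\cdots=x_{j_k}=0\})$, and blowing up such a coordinate subspace preserves this structure.

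It remains to see that $N(\X',\ol{\X}') < N(\X,\ol{\X})$, which (as $N$ is a bounded-below integer) forces the iteration to terminate, necessarily at a pair satisfying $(*)_x$. Since $\varphi$ is an isomorphism near $x$ and the exceptional divisors lie over $W$, away from $x$, the set $I_{\ni x}$ is unchanged, each $\D_i$ ($i \in I_{\ni x}$) being matched with $\D_i'$; in particular $\bigcap_{j\in J}\D_j'$ still contains $x$, so $c(J) \ge 1$, for all $J \subseteq I_{\ni x}$. Again by the local computation, for $J \subseteq I_{\ni x}$ one has that $\bigcap_{j\in J}\D_j'$ is the strict transform of $\bigcap_{j\in J}\D_j$, which is the union, over the connected components $V$ of $\bigcap_{j\in J}\D_j$, of the blow-ups of $V$ along $V \cap W$ — each of these being irreducible when $V \not\subseteq W$ and empty when $V \subseteq W$ — so $c(J)$ never increases. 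And $c(I)$ strictly decreases: over $W$ the $|I|$ strict transforms $\D_i'$ ($i \in I$) are pairwise disjoint, so $\bigcap_{i\in I}\D_i'$ is the strict transform of the single component of $\bigcap_{i\in I}\D_i$ through $x$. Thus $N$ drops; composing the finitely many blow-ups produced in this way gives the required $\varphi \colon (\X',\ol{\X}') \to (\X,\ol{\X})$.

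The one place needing genuine care is the toroidal local computation invoked twice above: that blowing up a stratum $W$ of the relative simple normal crossing divisor $\D$ both preserves the structure of an $O_K$-smooth relative normal crossing pair and transforms the multi-intersections $\bigcap_{j\in J}\D_j$ as stated — in particular killing the components of $\bigcap_{i\in I}\D_i$ away from $x$ while creating no new components for any index set. The choice of center, the complexity count, and the termination are all elementary.
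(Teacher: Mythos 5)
Your proof follows the same strategy as the paper's brief argument (blow up, away from $x$, the components of $\bigcap_{i\in I}\D_i$ missing $x$), and usefully supplies the termination argument, via the complexity count $N$, that the paper's three-sentence proof leaves implicit. One small inaccuracy: the strict transforms $\D_i'$ ($i\in I$) are \emph{not} pairwise disjoint over $W$ once $|I|\ge 3$ (blowing up the origin in $\Af^3_{O_K}$, the strict transforms of $\{x_2=0\}$ and $\{x_3=0\}$ still meet the exceptional $\P^2$ along a line). What is true, and what your argument actually needs, is that the \emph{total} intersection $\bigcap_{i\in I}\D_i'$ misses the exceptional divisor: for any $l\in I$ the strict transform $\D_l'$ already avoids the standard chart $U_l$ of the blow-up, since the preimage of $\D_l\setminus W$ in $U_l$ is empty, and the charts $U_l$ ($l\in I$) cover the exceptional divisor. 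With that repair the argument goes through unchanged; the maximality of $|I|$ is in fact never used.
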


\begin{proof} 
Let $\D = \bigcup_{i=1}^r \D_i$ and $I_{\ni x}$ as before. Then, 
for any $I \subseteq I_{\ni x}$, we have the decomposition 
$\bigcap_{i \in I}\D_i = \D_{I,0} \coprod \D_{I,1}$, where 
$\D_{I,0}$ is the connected component of $\bigcap_{i \in I}\D_i$ containing 
$x$ and $\D_{I,1}$ is the union of connected components not containing $x$. 
Then, by taking the composition of blow-ups along the (strong transforms of) 
$\D_{I,1}$'s, we obtain the desired morphism $\varphi$. 
\end{proof} 

Now we give a proof of Theorem \ref{mainthm2}. 

\begin{proof}[Proof of Theorem \ref{mainthm2}]
For a closed point $x$ of $\ol{\X}_k$, let us take a morphism of smooth pairs 
$\varphi: (\X',\ol{\X}') \lra (\X,\ol{\X})$ over $O_K$ which is a compsoition 
of toric blow-ups away from $x$ such that $(\X',\ol{\X}')$ satisfies 
$(*)_x$. (This is possible by Lemma \ref{lemma0}.) Let us put 
$\D' := \ol{\X}' - \X'$ and define 
$\D'_{\ni x} = \bigcup_{i=1}^r \D'_i, \D'_{\not\ni x} 
\subseteq \D'$ as before. Then we see that the exceptional divisor of 
$\varphi$ is contained in $\D'_{\not\ni x}$. Since $(\X',\ol{\X}')$ satisfies 
$(*)_x$, we can apply Corollary \ref{morealg} to it: Let 
$f: \ol{\X}' \lra \P^n_{O_K}$ be a morphism satisfying the conditions 
in the statement of Corollary \ref{morealg}, and let 
$V$ be an open subscheme of $\Af^n_{O_K}$ containing $\Af^n_k$ 
such that $f|_{f^{-1}(V)}: f^{-1}(V) \lra V$ is finite etale (which exists 
by (1) of Corollary \ref{morealg}) and let us put 
$\ol{\X}'_x := f^{-1}(V)$. Then, since we have 
$\D'_{\not\ni x} \cap \ol{\X}'_x = \emptyset$ by (2) of Corollary 
\ref{morealg}, $\varphi$ induces the isomorphism 
$\ol{\X}'_x \os{=}{\lra} \varphi(\ol{\X}'_x)$. \par 
For $1 \leq i \leq r$, let $H_i$ be the intersection of the $i$-th 
hyperplane in $\P^n_{O_K}$ with $V$. Then, by (3) of Corollary 
\ref{morealg}, $\D'\cap \ol{\X}'_x = \D'_{\ni x} \cap \ol{\X}'_x$ 
is a sub relative simple normal crossing divisor of 
$\bigcup_{i=1}^r f^{-1}(H_i)$ whose complementary divisor $C$ does not 
contain $x$. Now let us put 
$\X'_x := \X' \cap \ol{\X}'_x$, $\X''_x := \X'_x - C$, 
$\ol{\X}''_x := \ol{\X}'_x - C$. Then we have the diagram 
$$ 
(\X,\ol{\X}) \os{\varphi}{\lla} (\X'_x, \ol{\X}'_x) \os{j}{\lla} 
(\X''_x, \ol{\X}'_x) \os{f}{\lra} (V-\bigcup_{i=1}^r H_i, V) $$
(where $\varphi, f$ is the morphism induced by $\varphi: 
(\X',\ol{\X}') \lra (\X,\ol{\X})$, $f: \ol{\X}' \lra \P^n_{O_K}$ respectively 
and $j$ is the canonical inclusion) such that $\varphi \circ j$ induces 
the isomorphism $(\X''_x, \ol{\X}''_x) \os{=}{\lra} 
(\X \cap \varphi(\ol{\X}''_x), \varphi(\ol{\X}''_x))$. \par 
First we prove that Theorem \ref{mainthm2} is reduced to the following 
claim: \\
\quad \\
{\bf claim.} \,\,\, 
Let $(E,\nabla)$ be an object in $\MIC(\X'_{x,K}/K)$ and assume that, for any 
strict closed immersion $i:(\Y,\ol{\Y}) \hra (\X'_x, \ol{\X}'_x)$ with 
$\dim (\ol{\Y}/O_K) = 1$, $i^*(E,\nabla) \in \MIC(\Y_K/K)$ is overconvergent. 
Then the restriction of $(E,\nabla)$ by $j$ (which is an object in 
$\MIC(\X''_{x,K}/K)$) is overconvergent. \\
\quad \\
Indeed, assume the claim is true and assume we are given an object 
$(E,\nabla)$ in $\MIC(\X_K/K)$ satisfying the condition (2) of Theorem 
\ref{mainthm}. Then, since $\varphi: (\X'_x, \ol{\X}'_x) 
\allowbreak \lra (\X,\ol{\X})$ 
is a strict open immersion, the restriction of $(E,\nabla)$ to 
$\MIC(\X'_{x,K}/K)$ satisfies the assumption of the claim. Hence 
 the restriction of $(E,\nabla)$ by $j$ is overconvergent. So 
the restriction of $(E,\nabla)$ by $(\X''_x, \ol{\X}''_x) = 
(\X \cap \varphi(\ol{\X}''_x), \varphi(\ol{\X}''_x)) \hra (\X, \ol{\X})$ is 
also overconvergent. Since this is true for any $x$, we see that 
$(E,\nabla)$ is overconvergent by Proposition \ref{cov}. So we are reduced 
to proving the claim above. \par 
In the following, we prove the claim. (So let $(E,\nabla)$ be as 
in the claim.) Note that the $p$-adic completion 
of the diagram 
$$ (\X'_x, \ol{\X}'_x) \os{j}{\lla} (\X''_x, \ol{\X}'_x) \os{f}{\lra} 
(V - \bigcup_{i=1}^r H_i, V) $$ 
has the following form: 
$$ (\wh{\X}'_x, \wh{\ol{\X}}'_x) \os{\wh{j}}{\lla} 
(\wh{\X}''_x, \wh{\ol{\X}}'_x) \os{\wh{f}}{\lra} 
((V - \bigcup_{i=1}^r H_i)^{\wedge}, \wh{V}) = 
(\fG^r_{O_K} \times \fAf^{n-r}_{O_K}, \fAf^n_{O_K}). $$ 
For a closed subscheme $y \hra \G^{n-1}_{m,O_K}$ which is 
etale over $\Spec O_K$ and $1 \leq i \leq n$, let $\ol{\Y}_{y,i,0} 
\hra \ol{\cX}_0$ be the closed immersion defined by 
\begin{align*}
\ol{\Y}_{y,i,0} & := y \times \Af^1_{O_K} \hra 
\G^{n-1}_{O_K} \times \Af^1_{O_K} \\ & \cong 
\G^{i-1}_{O_K} \times \Af^1_{O_K} \times \G^{n-i}_{O_K} 
\hra \Af^{n}_{O_K} 
\end{align*}
(where the isomorphism in the second line is the permutation of the $i$-th 
factor and the last factor). Then let us put 
$$ 
\ol{\Y}_{y,i} := f^{-1}(\ol{\Y}_{y,i,0} \cap V) \supseteq 
\Y'_{y,i} := \ol{\Y}_{y,i} \cap \X'_x \supseteq 
\Y''_{y,i} := \ol{\Y}_{y,i} \cap \X''_x. 
$$ 
Then it is easy to see that $(\Y''_{y,i}, \ol{\Y}_{y,i})$ is a smooth 
pair over $O_K$ with $\dim (\ol{\Y}_{y,i}/O_K) =1$. 
Then, since the morphism 
$$ 
\ol{\Y}_{y,i} - \Y'_{y,i} = \ol{\Y}_{y,i} \cap \D'_i 
\os{\subset}{\lra} \ol{\Y}_{y,i} \cap f^{-1}(H_i) = \ol{\Y}_{y,i} - 
\Y''_{y,i} 
$$ 
is an open and closed immersion, we see that 
$(\Y'_{y,i}, \ol{\Y}_{y,i})$ is also a smooth pair over $O_K$. 
Hence, by assumption, the restriction of $(E,\nabla)$ by 
$(\Y'_{y,i}, \ol{\Y}_{y,i}) \hra (\X'_x, \ol{\X}'_x)$ is overconvergent. 
So the restriction of $(E,\nabla)$ by 
$(\Y''_{y,i}, \ol{\Y}_{y,i}) \hra (\X'_x, \ol{\X}'_x)$ is also 
overconvergent, that is, the restriction of $(E^{\an},\nabla^{\an})$ to 
$\MIC((\wh{\Y}''_{y,i,K}, \wh{\ol{\Y}}_{y,i,K})/K)$ is overconvergent for 
all $y$ and $i$. Hence, by Remark \ref{refined}, we can conclude that 
the restriction of $(E^{\an},\nabla^{\an})$ to 
$\MIC((\wh{\X}''_{x,K}, \wh{\ol{\X}}'_{x,K})/K)$ is overconvergent, 
that is, the pull-back of $(E,\nabla)$ by $j$ 
is overconvergent. Hence we have proved the claim and so the proof of 
the theorem is finished. 
\end{proof}

\end{document}